\title[Uniqueness of factorization for fusion-invariant representations]{Uniqueness of factorization for fusion-invariant representations}
\author{Jos\'e Cantarero}
\thanks{}
\address{
\hfill\break Centro de Investigaci\'on en Matem\'aticas, A.C. Unidad M\'erida \\
\hfill\break Parque Cient\'ifico y Tecnol\'ogico de Yucat\'an \\ 
\hfill\break Carretera Sierra Papacal--Chuburn\'a Puerto Km 5.5 \\
\hfill\break Sierra Papacal, M\'erida, YUC 97302 \\
\hfill\break Mexico.}
\email{cantarero@cimat.mx}
\author{Germ\'an Combariza}
\thanks{}
\address{
\hfill\break Universidad Externado de Colombia  \\
\hfill\break Departamento de Matem\'aticas \\ 
\hfill\break Calle 12 No. 1-17 Este.  \\
\hfill\break C.P. 111711, Bogot\'a \\
\hfill\break Colombia.}
\email{german.combariza@uexternado.edu.co}
\newcommand{\comments}[1]{}
\newcommand{\Aut}{\operatorname{Aut}\nolimits}
\newcommand{\Conv}{\operatorname{Conv}\nolimits}
\newcommand{\diag}{\operatorname{diag}\nolimits}
\newcommand{\Hom}{\operatorname{Hom}\nolimits}
\newcommand{\Irr}{\operatorname{Irr}\nolimits}
\newcommand{\Out}{\operatorname{Out}\nolimits}
\newcommand{\reg}{\operatorname{reg}\nolimits}
\newcommand{\Rep}{\operatorname{Rep}\nolimits}
\newcommand{\rk}{\operatorname{rk}\nolimits}
\def \C{{\mathbb C}}
\def \F{{\mathbb F}}
\def \N{{\mathbb N}}
\def \Z{{\mathbb Z}}
\newcommand{\Ff}{{\mathcal{F}}}
\newcommand{\Ll}{{\mathcal{L}}}
\newcommand{\pcom}{^\wedge_p}
\newcommand{\higherlim}[2]{\displaystyle\setbox1=\hbox{\rm lim}
	\setbox2=\hbox to \wd1{\leftarrowfill} \ht2=0pt \dp2=-1pt
	\setbox3=\hbox{$\scriptstyle{#1}$}
	\def\test{#1}\ifx\test\empty
	\mathop{\mathop{\vtop{\baselineskip=5pt\box1\box2}}}\nolimits^{#2}
	\else
	\ifdim\wd1<\wd3
	\mathop{\hphantom{^{#2}}\vtop{\baselineskip=5pt\box1\box2}^{#2}}_{#1}
	\else
	\mathop{\mathop{\vtop{\baselineskip=5pt\box1\box2}}_{#1}}%
	\nolimits^{#2}
	\fi\fi}
\theoremstyle{plain}
\newtheorem*{introtheorem}{Theorem}
\newtheorem{theorem}{Theorem}[section]
\newtheorem{proposition}[theorem]{Proposition}
\newtheorem{corollary}[theorem]{Corollary}
\newtheorem{lemma}[theorem]{Lemma}
\newtheorem{conjecture}[theorem]{Conjecture}
\theoremstyle{definition}
\newtheorem{definition}[theorem]{Definition}
\newtheorem{remark}[theorem]{Remark}
\newtheorem{example}[theorem]{Example}
\keywords{Factorial monoids, Fusion systems, Monoids of representations}
\subjclass{Primary 20D20; Secondary 20C15, 20M14}
\begin{document}

\begin{abstract}
Given a saturated fusion system $\Ff$ over a finite $p$-group $S$, we provide
criteria to determine when uniqueness of factorization into irreducible $\Ff$--invariant
representations holds. We use them to prove uniqueness of factorization when the order of $S$
is at most $p^3$. We also describe an example where the monoid of fusion-invariant
representations is not even half-factorial. Finally, we find other examples of
fusion systems where this monoid is not factorial using GAP.
\end{abstract}

\maketitle

\section*{Introduction}

The $K$-theory of the classifying space of a finite group $G$ is determined
by the Atiyah-Segal completion theorem \cite{A}, \cite{AS} which established 
it to be the completion of the representation ring of $G$ with respect to the augmentation ideal. 
This is generally different from the Grothendieck group of complex vector
bundles over $BG$, which is described in \cite{JO}. 

These results were generalized recently to classifying spaces of $p$-local 
finite groups. Given a $p$-local finite group $(S,\Ff,\Ll)$, the Grothendieck
group of complex vector bundles over $| \Ll| \pcom$ was found in \cite{CCM} to
be isomorphic to the representation ring $R(\Ff)$, which can expressed as an
inverse limit of representation rings over the orbit category of $\Ff$. On the
other hand, the $K$-theory of $|\Ll|\pcom$ is isomorphic to the completion of
$R(\Ff)$ with respect to the augmentation ideal (see \cite{BC}).

The representation ring $R(\Ff)$ can also be regarded as the Grothendieck group
associated to the monoid of equivalence classes of representations of $S$ which 
are $\Ff$--invariant. For instance, when $\Ff$ is the fusion system associated
to a finite group $G$ with $p$-Sylow subgroup $S$, they are precisely the 
representations of $S$ whose characters are constant on $G$--conjugacy
classes of elements of $S$. We begin the paper by giving a brief review of 
fusion-invariant representations and their main properties in Section \ref{SectionReview}.
Even though we prove our results in the full generality of saturated fusion systems,
our main examples come from finite groups, hence this paper should be of interest
to people who are only interested in the representation theory of finite groups.

The group $R(\Ff)$ is free abelian and its rank is the number of $\Ff$--conjugacy 
classes of elements of $S$. However, there are examples in \cite{G} and \cite{Re}
where an $\Ff$--invariant representation does not decompose as sum of 
irreducible $\Ff$--invariant representations in a unique way. This represents
a significant departure from the behaviour of the representation ring for finite
groups, and this article aims to provide a better understanding of this phenomenon.

This lack of uniqueness is better understood from the point of view of the monoid $\Rep(\Ff)$
of isomorphism classes of $\Ff$--invariant representations. Using the language of
reduced commutative monoids, this is a submonoid of the free monoid $\Rep(S)$, and 
these examples show that it is not necessarily free. Free monoids are also called 
factorial to stress the uniqueness of factorization as a sum of atoms. 
We find several criteria in Section \ref{SectionMonoids} that guarantee the uniqueness of 
factorization, some of them characterizations in fact. The following theorem
outlines the main results in this section.

\begin{introtheorem}
Let $\Ff$ be a saturated fusion system over a finite $p$-group $S$. If one of the
following conditions hold, then $\Rep(\Ff)$ is factorial.
\begin{itemize}
\item The number of $\Ff$--conjugacy classes of elements of $S$ equals the number 
of irreducible $\Ff$--invariant representations of $S$.
\item As a free abelian group, $R(\Ff)$ has a basis $\{ \rho_1, \ldots, \rho_k \}$ of representations which satisfies 
\[ \Irr(\rho_j) - \bigcup_{i \neq j} \Irr(\rho_i) \neq \emptyset \]
for all $j$. 
\item As a free abelian group, $R(\Ff)$ has a basis of disjoint virtual representations.
\item The fusion system $\Ff$ is transitive.
\item The essential rank of $\Ff$ is zero.
\end{itemize}
Moreover, the first two conditions are characterizations of the factoriality
of $\Rep(\Ff)$.
\end{introtheorem}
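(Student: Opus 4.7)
The plan rests on the following structural observation. The monoid $\Rep(\Ff)$ sits as a finitely generated reduced submonoid of the free commutative monoid $\Rep(S)\cong\N[\Irr(S)]$, and its Grothendieck group $R(\Ff)$ is free abelian of rank $k$ equal to the number of $\Ff$-conjugacy classes of elements of $S$. The atoms of $\Rep(\Ff)$ are exactly the irreducible $\Ff$-invariant representations; they generate $\Rep(\Ff)$ as a monoid and therefore span $R(\Ff)$ as an abelian group. Consequently, $\Rep(\Ff)$ is factorial if and only if these atoms are $\Z$-linearly independent, equivalently if and only if their number equals $k$, which is precisely item (1) and establishes it as a characterization.

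For item (2) I would prove both implications. For the direction from the condition to factoriality, given a basis $\{\rho_1,\ldots,\rho_k\}$ with witnesses $\chi_j\in\Irr(\rho_j)\setminus\bigcup_{i\neq j}\Irr(\rho_i)$, consider the linear functional $\ell_j\colon R(S)\to\Z$ that extracts the multiplicity of $\chi_j$. By construction $\ell_j(\rho_i)=0$ for $i\neq j$ and $\ell_j(\rho_j)\geq 1$, so for any $\pi\in\Rep(\Ff)$ written as $\pi=\sum a_i\rho_i$ in $R(\Ff)$ applying $\ell_j$ forces $a_j\geq 0$; hence the $\rho_j$ generate $\Rep(\Ff)$ freely. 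Conversely, if $\Rep(\Ff)$ is factorial with atoms $\tau_1,\ldots,\tau_k$ and $\Irr(\tau_1)\subseteq\bigcup_{i>1}\Irr(\tau_i)$, then setting $\Sigma=\tau_2+\cdots+\tau_k$ and choosing $N$ larger than the maximum multiplicity in $\tau_1$, the element $N\Sigma-\tau_1$ has non-negative multiplicities on every character in $\Irr(S)$ and lies in $R(\Ff)$, hence in $\Rep(\Ff)=R(\Ff)\cap\Rep(S)$. Its unique $\Z$-basis expansion then gives coefficient $-1$ on $\tau_1$, contradicting non-negativity of the atom decomposition.

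Item (3) reduces to (2) via a sign-normalization argument. Given a basis of disjoint virtual representations $\rho_j=\rho_j^+-\rho_j^-$, replace $\rho_j$ by $-\rho_j$ whenever needed so that $\rho_j^+\neq 0$. If some $\rho_j^-$ were still nonzero, then for every $\pi\in\Rep(\Ff)$ with $\pi=\sum a_i\rho_i$ one would obtain $a_j\geq 0$ from a character in $\Irr(\rho_j^+)$ and $a_j\leq 0$ from a character in $\Irr(\rho_j^-)$; the $\rho_j$-coordinate would vanish identically on $\Rep(\Ff)$, contradicting the fact that $\Rep(\Ff)$ generates $R(\Ff)$ while $\rho_j$ is a basis element. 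Hence every $\rho_j$ is in $\Rep(\Ff)$, and the pairwise disjoint supports supply the witnesses required by (2). For items (4) and (5) the plan is to use the structural hypothesis to match the atom count to $k$ and invoke (1): when the essential rank is zero, Alperin's fusion theorem gives $\Ff=\langle\Inn(S),\Aut_\Ff(S)\rangle$, so $\Ff$-invariant representations coincide with $\Aut_\Ff(S)$-invariant $S$-representations, whose irreducibles are the $\Aut_\Ff(S)$-orbit sums in $\Irr(S)$, and whose number equals the number of $\Aut_\Ff(S)$-orbits on $S$-conjugacy classes (by Brauer's permutation lemma applied to the character table), namely $k$. A parallel orbit-counting argument should handle transitivity once its precise definition is unpacked.

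The principal obstacle I anticipate is the converse direction of item (2): extracting an integral contradiction from a purely combinatorial failure of the irreducible-witness condition, through the auxiliary element $N\Sigma-\tau_1$ which must be kept carefully inside $R(\Ff)\cap\Rep(S)$. Items (4) and (5) bring a different kind of friction, namely translating fusion-theoretic structural data into a count of irreducible $\Ff$-invariant representations; for (5) one must verify that the Alperin-style reduction to $\Aut_\Ff(S)$-invariance matches the Grothendieck-rank count exactly, and for (4) an explicit description or enumeration of the irreducible $\Ff$-invariant representations is required using whatever transitivity hypothesis the paper adopts.
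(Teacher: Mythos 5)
Items (1), (2) and (3) of your plan are correct and essentially coincide with the paper's arguments: (1) is the paper's observation that an atomic monoid with finitely generated free Grothendieck group is factorial exactly when the number of atoms equals the rank; your proof of (2) via the multiplicity functionals $\ell_j$ and, for the converse, the auxiliary element $N\Sigma-\tau_1\in R(\Ff)\cap\Rep(S)=\Rep(\Ff)$ is the same mechanism the paper uses (there phrased as ``$\rho_j$ would be a subrepresentation of $\sum_{i\neq j}n_i\rho_i$''), and your sign-normalization reduction of (3) to (2) parallels the paper's proof of Proposition \ref{CasoDisjuntas}. For (5) you take a genuinely different, and valid, route: the paper simply notes that $R(\Ff)=\Z[\Irr(S)]^{\Aut_{\Ff}(S)}\cong\Z[\Irr(S)/\Aut_{\Ff}(S)]$ has a basis of disjoint characters and invokes (3), whereas you count the orbit sums and match them to the number of $\Ff$-conjugacy classes via Brauer's permutation lemma and invoke (1); your version needs that extra character-theoretic input but buys an explicit count of the atoms.

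The genuine gap is item (4). In the paper ``transitive'' means there are exactly two $\Ff$-conjugacy classes, and such a fusion system need \emph{not} have essential rank zero (for instance the fusion system of the Thompson group at $p=5$ over $5^{1+2}_+$, Case 6 of Section \ref{SectionSmall}, is transitive but has essential subgroups), so your proposed ``parallel orbit-counting'' through $\Aut_{\Ff}(S)$-invariance does not apply, and matching the atom count to $k=2$ is not automatic: a rank-two submonoid of a free monoid can easily fail to be factorial (e.g.\ the submonoid of $\N^2$ generated by $(2,0)$, $(1,1)$, $(0,2)$). The missing step, which the paper supplies in Lemma \ref{transitive}, is a character computation: if $\chi\in R(\Ff)$ takes the value $m$ at $1$ and the common value $n$ on all nontrivial elements, pairing $\chi$ with the trivial character and with a nontrivial one-dimensional character of $S$ (which exists because $S$ is a nontrivial $p$-group, hence has nontrivial abelianization) shows that $n$ is an integer and that $|S|$ divides $m-n$; consequently $R(\Ff)=\Z\cdot 1\oplus\Z\cdot\widetilde{\reg_S}$, a basis of disjoint characters, and factoriality follows from (3) (or from your item (2)). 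Without an argument of this kind, item (4) is not established by your proposal.
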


The essential rank of any saturated fusion system over an abelian $p$-group $S$ is zero. 
In particular, factoriality is guaranteed when the order of $S$ is at most
$p^2$. It is natural to wonder if this holds when the order is $p^3$. Saturated 
fusion systems over groups of order eight are few and easy to handle, and over
groups of order $p^3$ with $p$ odd, there is a classification in \cite{RV}. 
Using a case-by-case analysis and the criteria from Section \ref{SectionMonoids}, 
we determine the factoriality of $\Rep(\Ff)$ for all these possibilities. The following 
theorem corresponds to Theorem \ref{smallorder}, which is the main result in Section \ref{SectionSmall}.

\begin{introtheorem}
If $|S| \leq p^3$ and $\Ff$ is a saturated fusion system over $S$, then $\Rep(\Ff)$ is a factorial monoid.
\end{introtheorem}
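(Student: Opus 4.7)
The plan is to reduce to the genuinely non-abelian case and then proceed by a case-by-case analysis using the classifications already available in the literature, at each step applying one of the factoriality criteria summarized in the first introductory theorem.

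First I would dispose of the easy range $|S| \leq p^2$. Every group of order at most $p^2$ is abelian, so any saturated fusion system over $S$ has trivial essential rank (there are no essential proper subgroups because there are no non-trivial $p$-subgroups of $\Out(S)$ to speak of once $S$ is abelian, and the criterion in the introductory theorem explicitly covers this). Thus factoriality of $\Rep(\Ff)$ is immediate from the last bullet point. The same bullet point also handles the abelian groups of order $p^3$, namely $(\Z/p)^3$, $\Z/p^2 \times \Z/p$, and $\Z/p^3$.

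The remaining task is the non-abelian case $|S| = p^3$, which splits by parity of $p$. For $p = 2$ there are only two groups to consider, the dihedral group $D_8$ and the quaternion group $Q_8$; the saturated fusion systems over each are classical and few in number (for $D_8$: the fusion system of $D_8$ itself, of $S_4$, of $A_6$, and a couple of intermediate ones; for $Q_8$: the fusion system of $Q_8$ and those realized by $\operatorname{SL}_2(\F_3)$ and $\operatorname{GL}_2(\F_3)$). For odd $p$ I would invoke the classification in \cite{RV} of saturated fusion systems over the extraspecial groups of order $p^3$ (both the exponent-$p$ Heisenberg group and $\Z/p^2 \rtimes \Z/p$), which enumerates them essentially by the subgroup of $\Out(S)$ acting on the central quotient together with a choice of essential rank-one subgroup data. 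For each fusion system on the resulting finite list I would then verify factoriality by the cleanest available criterion: when $\Ff$ is transitive (e.g.\ the fusion systems of many of the simple-like examples), use the transitivity bullet; otherwise count $\Ff$--conjugacy classes of elements and compare with the number of irreducible $\Ff$--invariant representations, invoking the first bullet; when neither is immediate, exhibit explicitly a basis of $R(\Ff)$ inside $\Rep(\Ff)$ satisfying the second bullet, typically by summing representations of $S$ over orbits of the relevant subgroup of $\Out(S)$ to produce $\Ff$--invariant atoms with pairwise distinguishing irreducible constituents.

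The main obstacle is the middle of this analysis, namely those fusion systems over non-abelian groups of order $p^3$ for which neither transitivity nor a counting argument is immediately available. There the work is to construct, by hand, a basis of $R(\Ff)$ of the type required by the second criterion, using the known character tables of $D_8$, $Q_8$, and the extraspecial $p$-groups of order $p^3$, and then to check the disjointness-of-irreducible-constituents condition. Once that bookkeeping is done for each item in the short classification list, the union of all cases gives the theorem. I expect that in practice several of the fusion systems will actually satisfy the counting criterion (since the number of $\Ff$--conjugacy classes of elements drops predictably when $\Out_\Ff(S)$ is large), so the handwork should be confined to a small residual list.
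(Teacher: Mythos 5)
Your strategy is the same as the paper's: dispose of $|S|\le p^2$ and the abelian $p^3$ groups by the essential-rank-zero criterion, treat $D_8$ and $Q_8$ by the short list of saturated fusion systems over them, and for odd $p$ run through a classification of fusion systems over the nonabelian groups of order $p^3$, checking each case against one of the factoriality criteria. But as written this is an outline, not a proof: the entire substance of the theorem lies in the case-by-case verification that you explicitly defer (``once that bookkeeping is done\dots''). In the paper this bookkeeping occupies Cases 1--17 for $S=p_+^{1+2}$, following Tables 1.1 and 1.2 of \cite{RV}, and it is not routine: for groups such as $L_3(p)$ (with and without $3\mid p-1$), $He$, $Fi_{24}'$, $O'N$ and the Monster at $p=13$, one must compute the orbits of $\Irr(S)$ under $\Out_\Ff(S)$, impose the conditions coming from the essential rank-two elementary abelian subgroups $V_i$, and exhibit a basis of $R(\Ff)$ of representations satisfying the criterion of Proposition \ref{CharacterizationFuerte}; transitivity or conjugacy-class counting alone does not settle most of these. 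A proof must actually carry out these computations (or give a uniform argument replacing them), and your proposal contains neither.

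There are also two concrete inaccuracies in the reduction itself. First, the classification in \cite{RV} covers only the extraspecial group $p_+^{1+2}$ of exponent $p$; the other nonabelian group of order $p^3$ for odd $p$ (exponent $p^2$) is not on that list and needs a separate argument --- the paper invokes Theorem 4.2 of \cite{St} to conclude that every saturated fusion system over it has essential rank zero, whence Lemma \ref{EssentialRank} applies. Your plan folds this group into the \cite{RV} classification, which is a gap. Second, your enumerations at $p=2$ are off: up to isomorphism there are exactly three saturated fusion systems over $D_8$ (those of $D_8$, $\Sigma_4$ and $A_6$; there are no ``intermediate'' ones) and exactly two over $Q_8$ (those of $Q_8$ and $SL_2(\F_3)$); the group $GL_2(\F_3)$ has semidihedral Sylow $2$-subgroup of order $16$, not $Q_8$, and a fusion system over $Q_8$ with $\Out_\Ff(Q_8)\cong\Sigma_3$ would violate the Sylow axiom of saturation. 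These slips do not change the final answer, but they would have to be corrected, and the $A_6$ and $SL_2(\F_3)$ cases, as well as all the odd-prime cases, still require the explicit identification of the irreducible $\Ff$--invariant representations that your proposal leaves undone.
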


This result can not be taken further, since the examples without uniqueness of factorization
in \cite{G} and \cite{Re} correspond to fusion systems over groups of order $3^4$ and $2^4$, respectively.
They are in fact the fusion systems of $\Sigma_9$ and $PSL_3(\F_3)$ at the primes $3$ and $2$. One
may wonder if weaker factorization properties hold in $\Rep(\Ff)$ in general. One such property is 
half-factoriality, where we do not ask for uniqueness of factorization, but only for uniqueness of 
the length of factorization of each element. In Section \ref{SectionHalf}, we show that $\Rep(\Ff)$ 
is not half-factorial when $\Ff$ is the fusion system of $A_6$ at the prime $2$.

Even though the example of $A_6$ is completely described mathematically, we found this example thanks
to algorithms that we programmed and implemented in GAP \cite{GAP4}. Namely, for a fusion system $\Ff$ 
associated to a finite group $G$ with $p$-Sylow subgroup $S$, we built functions which
can determine the number of $\Ff$--conjugacy classes, the complexification of the representation ring, 
and the number of irreducible $\Ff$--invariant representations which are subrepresentations
of the regular representations. In some cases, we noticed that these pieces of information can be used 
to determine all the irreducible $\Ff$--invariant representations. Apart from the
examples from \cite{G} and \cite{Re}, we found new examples of fusion systems without uniqueness of
factorization into irreducible fusion-invariant representations. Namely, they are the fusion systems of
the groups:

\begin{itemize}
\item $\Sigma_6$, $\Sigma_8$, $PSL_2(\F_{17})$, $PSU_3(\F_5)$, $M_{10}$, $PSL_3(\F_5)$, $PSL_2(\F_{31})$, $PSU_3(\F_9)$ and $GL_3(\F_3)$ at the prime $2$.
\item $A_9$ and $PSL_3(\F_{19})$ at the prime $3$.
\end{itemize}

Apart from the non-factorial examples above, we found that factoriality holds for $A_8$ at the prime $2$ and 
for $PSp_4(\F_3)$ and $PSL_4(\F_7)$ at the prime $3$. Even though we have characterizations of the factoriality 
of $\Rep(\Ff)$, they are not completely satisfactory because they require determining the number of irreducible 
$\Ff$--invariant representations or studying the inclusion of $R(\Ff)$ into $R(S)$ to find a particular basis, 
but we do not have a direct criterion in terms of $\Ff$. 

There is another interesting aspect of the irreducible $\Ff$--invariant representations
which we could not settle completely in this paper. In Section \ref{SectionMonoids} we
show that if $\Rep(\Ff)$ is factorial, these representations must be subrepresentations
of the regular representation of $S$. We conjecture that this is always the case, even
when $\Rep(\Ff)$ is not factorial, and indeed it holds for all the examples where we
managed to determine all the irreducible $G$--invariant representations.

\section{Fusion-invariant representations}
\label{SectionReview}

In this section we provide an introduction to fusion-invariant
representations. Note that all the representations in this paper
are finite-dimensional unitary representations and we are only
interested in representations up to equivalence. We begin with a 
particular case which covers most of the examples found in this paper. 
However, the main results apply to the general definition.

\begin{definition}
\label{FusionInvariantConcrete}
Let $G$ be a finite group and $S$ a $p$-Sylow subgroup of $G$. We say that
a representation $\rho \colon S \to U_n$ is $G$--invariant
if the representations $ \rho_{|P}$ and $\rho_{|gPg^{-1}} \circ c_g$ are equivalent
for any $P \leq S$ and any $ g \in G$ that satisfies $gPg^{-1} \leq S$.
\end{definition}

The category of subgroups of $S$ with $G$--subconjugations as morphisms appears
as the indexing category in the Theorem of Cartan-Eilenberg on the $p$-local cohomology
of finite groups (see for instance Theorem III.10.3 in \cite{Bro}). This theorem can be improved to a homotopical decomposition
of $BG \pcom$ and C. Broto, R. Levi and B. Oliver \cite{BLO} created a suitable language and techniques
to study these decompositions and other spaces with similar properties. In this
paper the authors define the notion of saturated fusion system over a finite $p$-group
$S$. This is a category whose objects are the subgroups of $S$ and whose morphisms
are group homomorphisms which are required to satisfy some axioms. 

The interested reader can see Definitions 1.1 and 1.2 in \cite{BLO} for the formal 
definition, but the intuitive idea is that these morphisms behave like conjugations by elements 
of a bigger group. In fact, the main source of examples of saturated fusion systems comes 
from finite groups. If $G$ is a finite group and $S$ is a $p$-Sylow subgroup of $G$, there is a 
saturated fusion system called $\Ff_S(G)$ whose morphism sets are given by 
\[ \Hom_{\Ff_S(G)}(P,Q) = \Hom_G(P,Q) = \{ f \colon P \to Q \mid f = c_g \text{ for some } g \in G \} \]
But there are also exotic fusion systems, meaning that they are not of this form. At this moment, there are
many constructions of exotic fusion systems in the literature, although we will not encounter them in this paper.
We direct the interested reader to \cite{LO} and \cite{RV} for some examples. Next we extend Definition 
\ref{FusionInvariantConcrete} to this general setting.

\begin{definition}
Let $\Ff$ be a saturated fusion system over a finite $p$-group $S$. We say that a representation 
$\rho \colon S \to U_n$ is $\Ff$--invariant if the representations 
$\rho_{|P}$ and $\rho_{|f(P)} \circ f$ are equivalent for any
$P \leq S$ and any $f \in \Hom_{\Ff}(P,S)$.
\end{definition} 

In the particular case of $\Ff_S(G)$, this corresponds to $G$--invariant
representations. When $\Ff$ is clear, we also call them fusion-invariant
representations. These representations have been studied in \cite{BC}, 
\cite{CCM}, \cite{G} and \cite{Re}, but also in \cite{CC}, \cite{NS}, \cite{Z1}
and \cite{Z2} in a more general context. 

In order to determine whether a representation is fusion-invariant, we can use 
character theory and certain families of subgroups which generate the fusion 
system in a certain sense. For the following lemma, we say that two elements $x$, $y$ of $S$ are $\Ff$--conjugate 
if there exists a morphism $f$ in $\Ff$ such that $f(x)=y$. The definitions of 
centric, radical and essential subgroups can be found in Definition 1.6 of \cite{BLO},
Definition A.9 of \cite{BLO} and in the second paragraph of page 2 in \cite{H}, respectively.

\begin{lemma}
Let $\rho \colon S \to U_n$ be a representation. The following
conditions are equivalent.
\begin{enumerate}
\item $\rho$ is $\Ff$--invariant.
\item $\rho_{|P}$ and $\rho_{|P} \circ f$ are isomorphic for any $P \leq S$
and any $f \in \Aut_{\Ff}(P)$.
\item $\rho_{|P}$ and $\rho_{|P} \circ f$ are isomorphic for any $\Ff$--centric
radical subgroup $P$ of $S$ and any $f \in \Aut_{\Ff}(P)$.
\item $\rho_{|P}$ and $\rho_{|P} \circ f$ are isomorphic for any $\Ff$--essential 
subgroup $P$ of $S$ and any $f \in \Aut_{\Ff}(P)$ and for $P=S$ with any $f \in \Aut_{\Ff}(S)$.
\item $\chi_{\rho}(x) = \chi_{\rho}(y)$ for any pair of $\Ff$--conjugate elements
$x$, $y$.
\end{enumerate}
\end{lemma}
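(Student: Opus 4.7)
The plan is to verify a cycle of implications $(1) \Rightarrow (2) \Rightarrow (3) \Rightarrow (4) \Rightarrow (1)$ and then establish $(1) \Leftrightarrow (5)$ via character theory.

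First I would dispose of $(1) \Leftrightarrow (5)$ by a direct character computation. Since two unitary representations of a finite group are equivalent precisely when they have equal characters, the condition that $\rho_{|P}$ and $\rho_{|f(P)} \circ f$ are isomorphic for every $f \in \Hom_{\Ff}(P,S)$ amounts to $\chi_{\rho}(x) = \chi_{\rho}(f(x))$ for every $x \in P$. As any pair of $\Ff$-conjugate elements is realized by some such $f$ (and every such $f$ produces only $\Ff$-conjugate pairs), this is precisely condition (5).

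Next, the implications $(1) \Rightarrow (2) \Rightarrow (3) \Rightarrow (4)$ follow purely by restricting the class of morphisms or subgroups under consideration. Indeed, $(1) \Rightarrow (2)$ because $f \in \Aut_{\Ff}(P)$ has $f(P) = P$; $(2) \Rightarrow (3)$ because $\Ff$-centric radical subgroups form a subclass of all subgroups; and $(3) \Rightarrow (4)$ because every $\Ff$-essential subgroup is $\Ff$-centric radical, as is $S$ itself (the saturation axiom forces $\Inn(S)$ to be a Sylow $p$-subgroup of $\Aut_{\Ff}(S)$, so $\Out_{\Ff}(S)$ has order coprime to $p$ and hence trivial $O_p$, and $S$ is $\Ff$-centric for tautological reasons).

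The real content lies in $(4) \Rightarrow (1)$, and it rests on Alperin's fusion theorem for saturated fusion systems (Theorem A.10 of \cite{BLO}): any $f \in \Hom_{\Ff}(P,S)$ admits a factorization $f = \phi_n|_{P_{n-1}} \circ \cdots \circ \phi_1|_{P_0}$, where $P_0 = P$, $P_i = \phi_i(P_{i-1})$, each $\phi_i$ lies in $\Aut_{\Ff}(R_i)$ with $R_i$ either $\Ff$-essential or equal to $S$, and $P_{i-1} \leq R_i$. Hypothesis (4) yields $\rho_{|R_i}\sim \rho_{|R_i} \circ \phi_i$, and restricting this equivalence along $P_{i-1} \leq R_i$ produces $\rho_{|P_{i-1}} \sim \rho_{|P_i} \circ \phi_i|_{P_{i-1}}$. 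Composing these equivalences along the chain gives $\rho_{|P} \sim \rho_{|f(P)} \circ f$, which is condition (1). The main (and essentially only) obstacle is to cite Alperin's theorem in the precise form that restricts the factors to automorphism groups of $\Ff$-essential subgroups and of $S$; once that is in hand, the argument is simply a finite composition of known equivalences.
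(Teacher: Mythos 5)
Your proof is correct; note, however, that the paper itself states this lemma without proof (it is treated as a known consequence of Alperin's fusion theorem), so there is no "paper proof" to compare against, and your argument is exactly the standard one an expert would supply. The cycle $(1)\Rightarrow(2)\Rightarrow(3)\Rightarrow(4)\Rightarrow(1)$ together with $(1)\Leftrightarrow(5)$ covers all equivalences; the character argument for $(1)\Leftrightarrow(5)$ is fine because finite-dimensional unitary representations are determined up to equivalence by their characters and restriction of morphisms in $\Ff$ realizes every $\Ff$--conjugate pair of elements; and your justification that essential subgroups and $S$ itself are $\Ff$--centric radical (saturation forcing $\Out_{\Ff}(S)$ to have order prime to $p$) is what makes $(3)\Rightarrow(4)$ work. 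One small citation caveat: Theorem A.10 of \cite{BLO} is Alperin's fusion theorem in the form where morphisms factor through automorphisms of fully normalized $\Ff$--centric $\Ff$--radical subgroups; the sharper version you invoke for $(4)\Rightarrow(1)$, with factors restricted to $\Ff$--essential subgroups and $S$ (the Alperin--Goldschmidt form), is true but should be cited from a source that states it in that form (for instance the reference \cite{H} used in the paper for essential subgroups, or Aschbacher--Kessar--Oliver). Alternatively, you could run your $(4)\Rightarrow(1)$ argument verbatim with centric radical subgroups to get $(3)\Rightarrow(1)$ from BLO's statement, and deduce $(4)\Rightarrow(3)$ separately; as written, just adjust the reference.
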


In the case of $\Ff_S(G)$, the last condition states that a representation $\rho$ is $G$--invariant if and
only $\chi_{\rho}(x) = \chi_{\rho}(y)$ for any pair of $G$--conjugate elements of 
$S$. We now enumerate some immediate properties of $\Ff$--invariant representations.

\begin{itemize}
\item If $\alpha$ is equivalent to $\beta$ and $\alpha$ is $\Ff$--invariant, so
is $\beta$.
\item If $\rho$ and $\alpha$ are $\Ff$--invariant, so are $\rho \oplus \alpha$
and $\rho \otimes \alpha$.
\item The trivial representation and the reduced regular representation are 
$\Ff$--invariant for any $\Ff$.
\item If $\rho$ and $\rho \oplus \alpha$ are $\Ff$--invariant, so is $\alpha$.
\item If $n\rho$ is $\Ff$--invariant for some $n \neq 0$, so is $\rho$.
\item If $\rho \oplus \alpha$ is isomorphic to $\rho \oplus \beta$, then $\alpha$
is isomorphic to $\beta$.
\end{itemize}

\begin{definition}
An $\Ff$--invariant representation is called irreducible if it can not be decomposed
as the direct sum of two nontrivial $\Ff$--invariant representations.
\end{definition}

It should be clear that any $\Ff$--invariant representation can be decomposed as a direct sum of irreducible 
$\Ff$--invariant representations. Note that an irreducible $\Ff$--invariant representation 
is not necessarily irreducible if we regard it as a representation of $S$. The following 
example illustrates this. 

\begin{example}
\label{SigmaTres}
Consider the $3$-Sylow subgroup $S=\{ 1, (1,2,3), (1,3,2) \}$ of $\Sigma_3$. The irreducible
representations of $S$ are the $1$-dimensional representations $\rho_j$ determined by
\[ \rho_j(1,2,3) = e^{2 \pi i j/3} \]
for $j=0,1,2$. Up to equivalence, the representations of $S$ have the form $ \rho = m_0 \rho_0
+ m_1 \rho_1 + m_2 \rho_2$ with $m_i \in \N$. Since $S$ does not have any nontrivial proper subgroup and
$\Aut_{\Sigma_3}(S) = \{ 1, c_{(1,2)} \}$, we have that $\rho$ is $\Sigma_3$--invariant
if and only if $ \rho \circ c_{(1,2)} $ is equivalent to $\rho$. But
\[ \rho \circ c_{(1,2)} = m_0 \rho_0 + m_1 \rho_2 + m_2 \rho_1 \]
and so this holds if and only if $m_1=m_2$. Hence any $\Sigma_3$--invariant
representation is a nonnegative integral combination of $\rho_0$ and $\rho_1 + \rho_2$. 
These are irreducible $\Sigma_3$--invariant representations, since they can not
be decomposed as the sum of two nontrivial $\Sigma_3$--invariant subrepresentations.
Note that $\rho_1 + \rho_2$ is not irreducible as a representation of $S$.
\end{example}

The isomorphism classes of $\Ff$--invariant representations of $S$ 
form a semiring with the operations of direct sum and tensor product. We denote it by 
$\Rep(\Ff)$ and when $\Ff = \Ff_S(G)$, we denote it by $\Rep_G(S)$ instead. By definition, 
this is a subsemiring of the semiring $\Rep(S)$. For the purpose of studying the uniqueness 
of decomposition as a sum of irreducibles, we can ignore their multiplicative structures and regard them simply as commutative monoids. 

\begin{remark}
We chose to write $\Rep_G(S)$ despite the fact that if corresponds to $\Ff_S(G)$ to emphasize 
that its elements are classes of representations of $S$.
\end{remark}

The theory of $\Ff$--invariant representations has some similarities with the
theory of supercharacters and superclasses \cite{DI}. For instance, $\Ff$ determines
a partition of $S$ given by $\Ff$--conjugacy classes, the set $\{1\}$ is one of the
classes and characters of $\Ff$--invariant representations are constant on each of 
these $G$--conjugacy classes. We can appreciate the difference in the case of $\Ff_{D_8}(\Sigma_4)$. The 
group $D_8$ is partitioned into four $\Sigma_4$--conjugacy classes and Example 
4.5.4 in \cite{G} shows that any representation of $D_8$ which is $\Sigma_4$--invariant is generated by $1$, $X+Z$,
$Y+Z$ and $XY$, where $X$, $Y$ and $XY$ are the one-dimensional irreducible
representations of $D_8$ and $Z$ is the two-dimensional irreducible representation. If this partition of $D_8$ 
were a part of a supercharacter theory, there would exist a supercharacter which would contain 
$2Z$ as a subrepresentation. But then the partition of the set of irreducible representations 
of $D_8$ would have less than four elements. 

On the other hand, it is easier to produce a supercharacter theory over a finite $p$-group
$S$ which does not come from a saturated fusion system over $S$. The two-part partition 
of $S$ where all the nontrivial elements are in the same class always corresponds to a supercharacter
theory. But if $S$ is a $p$-group whose nontrivial elements have at least two different orders,
the subsets forming this partition can not be $\Ff$--conjugacy classes.

\section{Monoids with unique factorizations}
\label{SectionMonoids}

In this section we recall some terminology and results from the theory of reduced commutative 
monoids in order to study uniqueness of factorization in the monoids of fusion-invariant 
representations introduced in the previous section. We follow the terminology from \cite{GH}. 
Since our monoids are commutative, we will denote by $0$ the identity element. In the rest of 
the paper, $\Ff$ will denote a saturated
fusion system over a $p$-group $S$.

\begin{definition}
A commutative monoid is reduced if $0$ is the only invertible element. 
\end{definition}

The commutative monoid $\Rep(\Ff)$ is reduced because it is a submonoid of $\Rep(S)$, which is
reduced.

\begin{definition}
Let $M$ be a reduced commutative monoid. A non-zero element $x$ of $M$ is called an atom if
$x=y+z$ implies $y=0$ or $z=0$. 
\end{definition}

\begin{definition}
An atomic monoid is a reduced commutative monoid which is generated by its set of atoms.
\end{definition}

The monoid $\Rep(\Ff)$ is atomic and its atoms are the irreducible $\Ff$--invariant representations of $S$.

\begin{definition}
An atomic monoid is called factorial if every nonzero element can be expressed as a finite sum of atoms 
in a unique way up to reordering.
\end{definition}

This is also called a unique factorization monoid. It is easy to see that a reduced commutative monoid is factorial
if and only if it is free, that is, isomorphic to a monoid of the form
\[ \N X = \bigoplus_{x \in X} \N  \]
for some set $X$. For a finite group $G$, the monoid $\Rep(G)$ is factorial because any representation of $G$ can be decomposed as a direct sum of irreducible 
representations in a unique way up to reordering. The atoms of $\Rep_{\Sigma_3}(S)$ are the 
representations $\rho_0$ and $\rho_1 + \rho_2$ described in Example \ref{SigmaTres} and this monoid is factorial since
\[ m \rho_0 + n(\rho_1 + \rho_2) = m'\rho_0 + n'(\rho_1 + \rho_2) \]
implies $m=m'$ and $n=n'$.

In order to describe a non-factorial example, we introduce the Grothendieck group of 
$\Rep(\Ff)$, which is denoted by $R(\Ff)$ and called the representation ring of $\Ff$. 
We also use the notation $R_G(S)$ when $\Ff=\Ff_S(G)$. In general, $R(\Ff)$ is a free 
abelian group whose rank is the number of $\Ff$--conjugacy classes of elements of $S$
(see Corollary 2.2 in \cite{BC}). However, the monoid $\Rep(\Ff)$ is not free in general
as the following example from \cite{G} shows.

\begin{example}
\label{EjemploBueno}
Let $S$ be a $3$-Sylow subgroup of $\Sigma_9$. The monoid $\Rep_{\Sigma_9}(S)$
is not factorial. The following example was first found in Example 4.5.5 of \cite{G}.
The representation ring $R_{\Sigma_9}(S)$ is the free abelian group generated by the 
following $\Sigma_9$--invariant virtual representations
\[ 1, a_1+a_2+X, b_0+c_0+Z-X, b_1+b_2+c_1+c_2+2Z, X+Y \]
where $a_i$, $b_0$, $c_i$, $X$, $Y$ and $Z$ are representations of $S$ which do
not share any irreducible representations of $S$. A more convenient basis is given
by
\[ 1, a_1+a_2+X, b_0+c_0+Z+a_1+a_2, b_1+b_2+c_1+c_2+2Z, X+Y \]
By Remark \ref{Criterio}, we can conclude that $1$, $ b_0+c_0+Z+a_1+a_2$, $b_1+b_2+c_1+c_2+2Z$ and $X+Y$ are irreducible
$\Ff$--invariant representations. On the other hand, using the same criterion with the
basis
\[ 1, a_1+a_2+X, b_0+c_0+Z+Y, b_1+b_2+c_1+c_2+2Z, X+Y \]
we can conclude that $a_1+a_2+X$ and $b_0+c_0+Z+Y$ are irreducible $\Ff$--invariant
representations. Then the expression
\[ (a_1+a_2+X)+(b_0+c_0+Z+Y) = (b_0+c_0+Z+a_1+a_2)+(X+Y) \]
shows that $\Rep_{\Sigma_9}(S)$ is not factorial.
\end{example}

Another known example of a non-factorial monoid of fusion-invariant representations, shown in Example A.2 of \cite{Re},
is $\Rep_{PGL_3(\F_3)}(SD_{16})$.

\begin{remark}
\label{IsomorphismFusion}
Let $\Ff_1$ and $\Ff_2$ be saturated fusion systems over $S_1$ and $S_2$, respectively. These fusion
systems are isomorphic if there is an isomorphism of groups $\alpha \colon S_1 \to S_2$ and an isomorphism
of categories $\alpha' \colon \Ff_1 \to \Ff_2$ given on objects by $\alpha'(P) = \alpha(P)$. It is easy
to show that the isomorphism $\alpha$ induces an isomorphism of monoids $\Rep(\Ff_2) \to \Rep(\Ff_1)$ 
and an isomorphism of rings $R(\Ff_2) \to R(\Ff_1)$.
\end{remark}

The following result is well known in the theory of non-unique factorizations,
but we include the proof of this particular case for completeness.

\begin{proposition}
Let $M$ be an atomic monoid such that its Grothendieck group $K(M)$
is free abelian of finite rank. Then $M$ is factorial if and 
only if the rank of $K(M)$ equals the cardinality of the set of atoms
of $M$.
\end{proposition}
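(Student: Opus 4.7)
The plan is to reduce factoriality of $M$ to the question of when the natural surjection $\varphi \colon \N A \to M$, built from the set $A$ of atoms of $M$, is injective. Since $M$ is atomic, $\varphi$ is surjective by definition, and $M$ is factorial exactly when $\varphi$ is an isomorphism, i.e., when $M \cong \N A$ as monoids.

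The forward direction is immediate. If $M$ is factorial, then $M \cong \N A$, hence $K(M) \cong K(\N A) = \Z A$. Because $K(M)$ has finite rank by hypothesis, $A$ must be finite with $|A|$ equal to the rank of $K(M)$.

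For the converse, assume that $|A|$ equals the rank of $K(M)$. The map $\varphi$ extends to a group homomorphism $\tilde\varphi \colon \Z A \to K(M)$, and this extension is surjective because its image contains the atoms, which generate $M$ as a monoid and therefore generate $K(M)$ as an abelian group. A surjective homomorphism between free abelian groups of the same finite rank must be an isomorphism: the kernel is a subgroup of the finitely generated free abelian group $\Z A$, hence itself free, and the short exact sequence
\[
0 \to \ker \tilde\varphi \to \Z A \to K(M) \to 0
\]
splits, so additivity of rank forces $\ker \tilde\varphi$ to have rank zero and thus to be trivial. Now $\varphi$ factors as $\N A \to M \to K(M)$, where the second arrow is the canonical map to the Grothendieck group, and this composite coincides with the restriction of the isomorphism $\tilde\varphi$ to $\N A$. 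In particular it is injective, so $\varphi$ itself must be injective, proving that $M$ is free on $A$ and therefore factorial.

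The only point that requires any real argument is the algebraic fact used in the converse, namely that a surjection between finitely generated free abelian groups of the same rank is bijective; this follows from the splitting of short exact sequences of free abelian groups together with additivity of rank (or, equivalently, from the structure theorem for finitely generated abelian groups). Everything else is bookkeeping with the universal property of the Grothendieck group.
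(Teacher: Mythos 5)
Your proposal is correct and takes essentially the same route as the paper: both hinge on the surjection $\tilde\varphi \colon \Z A \to K(M)$ induced by the universal map, the freeness of its kernel, and additivity of rank in the resulting (split) short exact sequence. The only difference is organizational: the paper argues the converse contrapositively (non-uniqueness of factorization gives a nontrivial relation among atoms in $K(M)$, hence a nonzero kernel and a rank defect), while you argue directly that equal finite ranks force $\tilde\varphi$ to be an isomorphism and then pull injectivity back along $\N A \to M \to K(M)$; the underlying content is the same.
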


\begin{proof}
Let $X$ be the set of atoms of $M$. If $M$ is factorial, then it is 
isomorphic to $\N X$ and therefore $K(M) \cong \Z X$.

Now assume that $M$ is not factorial. Since the set $X$ of atoms of $M$ 
generates $K(M)$ as an abelian group, there is an epimorphism $ \varphi \colon \Z X \to K(M)$
induced by the universal map from $M$ to $K(M)$. Since $M$ is not factorial, 
there is a relation $\sum n_x x = 0$ in $K(M)$ with at least one $n_x \neq 0$. 
Hence $\varphi$ is not injective and there is an exact sequence
\[ 0 \to \Z Y \to \Z X \to K(M) \to 0 \]
with $Y$ nonempty. Since $K(M)$ is free abelian of finite rank, if $|X|$ is finite we have
\[ |X| = \rk K(M) + |Y| \]
and so $\rk K(M) < |X|$. If $X$ is infinite, then certainly $\rk K(M)$ differs from $|X|$.
\end{proof}

In particular, the proof shows that if $M$ is an atomic monoid such that $K(M)$ is
free abelian of finite rank, the number of atoms in these atomic monoids is greater
or equal than the rank of $K(M)$. 

\begin{corollary}
\label{Bound}
The number of irreducible $\Ff$--invariant representation is greater or equal than the 
rank of $R(\Ff)$.
\end{corollary}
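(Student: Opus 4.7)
The plan is to apply the preceding proposition directly, since the corollary is essentially a repackaging of the inequality highlighted in the paragraph immediately following its proof. First I would verify that $M = \Rep(\Ff)$ fits the hypotheses: it was already noted earlier in the section that $\Rep(\Ff)$ is atomic with atoms precisely the irreducible $\Ff$--invariant representations of $S$, and that its Grothendieck group is $K(\Rep(\Ff)) = R(\Ff)$, which is free abelian of finite rank (equal to the number of $\Ff$--conjugacy classes of elements of $S$, by Corollary 2.2 of \cite{BC}).

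Next I would split into the two cases produced by the proposition. If $\Rep(\Ff)$ is factorial, then by the proposition the number of atoms equals $\rk R(\Ff)$, so the inequality holds with equality. If $\Rep(\Ff)$ is not factorial, then the proof of the proposition exhibits a short exact sequence $0 \to \Z Y \to \Z X \to R(\Ff) \to 0$ with $Y$ nonempty, where $X$ is the set of atoms, giving $|X| = \rk R(\Ff) + |Y| > \rk R(\Ff)$ when $X$ is finite, and $|X| > \rk R(\Ff)$ trivially otherwise. In either case we obtain the desired inequality, so there is no real obstacle; the only thing to be careful about is making the logical dependence on the preceding proposition explicit, since the statement as written follows from an observation made in its proof rather than from the formal equivalence it states.
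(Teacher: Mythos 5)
Your proposal is correct and follows the paper's own route: the corollary is exactly the observation extracted from the proof of the preceding proposition, applied to $M=\Rep(\Ff)$ with $K(M)=R(\Ff)$ free abelian of finite rank and atoms the irreducible $\Ff$--invariant representations. Your case split (factorial gives equality, non-factorial gives strict inequality via the exact sequence $0 \to \Z Y \to \Z X \to R(\Ff) \to 0$) matches the argument the paper intends, so nothing is missing.
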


\begin{corollary}
\label{CharacterizationNumber}
The monoid $Rep(\Ff)$ is factorial if and only if the number of $\Ff$--conjugacy classes
of elements of $S$ equals the number of irreducible $\Ff$--invariant representations of $S$.
\end{corollary}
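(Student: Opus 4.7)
The plan is to apply the preceding proposition directly to the monoid $M = \Rep(\Ff)$. First I would recall the three ingredients already established earlier in the excerpt: (i) $\Rep(\Ff)$ is an atomic monoid whose atoms are, by definition, the irreducible $\Ff$--invariant representations of $S$; (ii) its Grothendieck group $K(\Rep(\Ff))$ is precisely the representation ring $R(\Ff)$; and (iii) by Corollary 2.2 of \cite{BC}, $R(\Ff)$ is a free abelian group whose rank equals the number of $\Ff$--conjugacy classes of elements of $S$.

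With these three facts in hand, the hypotheses of the proposition are satisfied: $\Rep(\Ff)$ is atomic and its Grothendieck group is free abelian of finite rank. The proposition then immediately yields the desired equivalence, since ``the rank of $K(M)$ equals the cardinality of the set of atoms of $M$'' translates, in this setting, to ``the number of $\Ff$--conjugacy classes of elements of $S$ equals the number of irreducible $\Ff$--invariant representations of $S$''.

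There is essentially no obstacle here; the statement is a direct specialization of the proposition to the monoid $\Rep(\Ff)$, and the only thing to verify is the dictionary between the abstract monoid-theoretic data (atoms, rank of $K(M)$) and the concrete fusion-theoretic data (irreducible $\Ff$--invariant representations, number of $\Ff$--conjugacy classes). One small point worth a sentence of justification is finiteness: since $S$ is a finite $p$-group it has finitely many $\Ff$--conjugacy classes, so $R(\Ff)$ is of finite rank, and by Corollary \ref{Bound} the set of atoms is finite as well, ensuring that the cardinality comparison in the proposition is meaningful.
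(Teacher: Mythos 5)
Your proposal is correct and matches the paper's route exactly: the corollary is a direct specialization of the preceding proposition, using that the atoms of $\Rep(\Ff)$ are the irreducible $\Ff$--invariant representations and that $K(\Rep(\Ff)) = R(\Ff)$ is free abelian of rank equal to the number of $\Ff$--conjugacy classes (Corollary 2.2 of \cite{BC}). Your extra remark on finiteness is harmless but not strictly needed, since the proposition already covers the case of an infinite set of atoms.
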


It is illustrative to see an example where these numbers differ.

\begin{example}
\label{AtomsOfSigmaNueve}
We return to $\Rep_{\Sigma_9}(S)$ from Example \ref{EjemploBueno}, which we already know
not to be factorial. The rank of $R_{\Sigma_9}(S)$ is five and it is freely generated as an
abelian group by the following $\Sigma_9$--invariant virtual representations of $S$.
\[ 1, P= a_1+a_2+X, Q = b_0+c_0+Z-X, R = b_1+b_2+c_1+c_2+2Z, S = X+Y \]
We replace this basis of $R_{\Sigma_9}(S)$ by
\[ 1, P, Q'=Q+S = b_0+c_0+Z+Y, R, S \]
which is a now a basis of $\Sigma_9$--invariant representations.

Let $\rho = a1+bP+cQ'+dR+eS$ be an irreducible $\Sigma_9$--invariant representation,
where the coefficients are integers. By the irreducibility, if all the coefficients 
are nonnegative, then $\rho$ must be one of these five representations, which we already
know to be irreducible by Example \ref{EjemploBueno}.

Next assume that at least one coefficient is negative. This coefficient could not be 
$b$, because then $a_1$ and $a_2$ would appear with negative coefficient. For the same reason,
it could not be $a$, $c$ or $d$. Therefore let us assume that $ e \leq -1$. Then we must have 
$e+b \geq 0$ and $e+c \geq 0$, since $e+b$ is the coefficient of $X$ in $\rho$
and $e+c$ is the coefficient of $Y$ in $\rho$. But then
\[ \rho = a1 + (-e)(P-S+Q') + (b+e)P + (c+e)Q' + dR \]
All the coefficients are now nonnegative and $P-S+Q' = a_1 + a_2 + b_0 + c_0 + Z$ is a representation.
So $\rho$ could also be $P-S+Q'$ which we already knew from Example \ref{EjemploBueno}
to be irreducible. We have proved that there are exactly six irreducible $\Sigma_9$--invariant representations. 
\end{example}

Given a representation $\rho$ of $S$, let us denote by $\Irr(\rho)$ the set of its irreducible 
subrepresentations.

\begin{proposition}
\label{CharacterizationFuerte}
The monoid $\Rep(\Ff)$ is factorial if and only if $R(\Ff)$ has a basis 
$\{ \rho_1, \ldots, \rho_k \}$ of representations which satisfies 
\[ \Irr(\rho_j) - \bigcup_{i \neq j} \Irr(\rho_i) \neq \emptyset \]
for all $j$. Moreover, if this holds, this basis is the set of irreducible
$\Ff$--invariant representations.
\end{proposition}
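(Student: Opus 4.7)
The plan is to prove both implications directly by translating between the expansion of an element of $R(\Ff)$ in the basis $\{\rho_1,\ldots,\rho_k\}$ and its expansion in the basis of $S$-irreducibles. The observation underlying everything is that an element of $R(\Ff)$ lies in $\Rep(\Ff)$ precisely when every coefficient in the $S$-irreducible expansion is non-negative. I will arrange the argument so that the ``moreover'' statement falls out of the $(\Leftarrow)$ direction for free.

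For the $(\Leftarrow)$ direction, for each $j$ I choose a witness $\chi_j \in \Irr(\rho_j) \setminus \bigcup_{i \neq j} \Irr(\rho_i)$, and let $m_j$ denote its (positive) multiplicity in $\rho_j$. Given any $\Ff$-invariant representation $\rho$, I expand $\rho = \sum_i \lambda_i \rho_i$ with $\lambda_i \in \Z$, which is possible since $\{\rho_i\}$ is a $\Z$-basis of $R(\Ff)$. Comparing the multiplicity of $\chi_j$ on both sides, only $\rho_j$ contributes on the right, so this multiplicity equals $\lambda_j m_j$. Because $\rho$ is a genuine representation, $\lambda_j m_j \geq 0$ and hence $\lambda_j \geq 0$. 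Thus $\Rep(\Ff) = \N\{\rho_1,\ldots,\rho_k\}$, and the expression is unique by $\Z$-independence of the $\rho_i$. This shows $\Rep(\Ff)$ is free on $\{\rho_1,\ldots,\rho_k\}$, hence factorial, and that these representations are exactly its atoms, settling the ``moreover'' claim.

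For the $(\Rightarrow)$ direction, suppose $\Rep(\Ff)$ is factorial. By Corollary \ref{CharacterizationNumber}, the atoms $\rho_1,\ldots,\rho_k$ form a $\Z$-basis of $R(\Ff)$, and factoriality gives $\Rep(\Ff) = \N\{\rho_1,\ldots,\rho_k\}$. To verify the required condition I argue by contradiction: suppose some $j$ satisfies $\Irr(\rho_j) \subseteq \bigcup_{i \neq j} \Irr(\rho_i)$. Writing $m_{i\chi}$ for the multiplicity of $\chi$ in $\rho_i$, I will construct a virtual representation $v = \sum_i \lambda_i \rho_i$ with $\lambda_j < 0$ whose coefficients in the $S$-irreducible basis are all non-negative, contradicting the description of $\Rep(\Ff)$. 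Concretely, set $\lambda_j = -1$ and $\lambda_i = N$ for $i \neq j$, where $N \geq \max_{\chi \in \Irr(\rho_j)} m_{j\chi}$. For $\chi \notin \Irr(\rho_j)$ the coefficient of $\chi$ in $v$ is automatically non-negative. For $\chi \in \Irr(\rho_j)$, the hypothesis supplies some $i \neq j$ with $m_{i\chi} \geq 1$, so the coefficient of $\chi$ in $v$ is $-m_{j\chi} + N \sum_{i \neq j} m_{i\chi} \geq -m_{j\chi} + N \geq 0$. Hence $v \in \Rep(\Ff)$, forcing $\lambda_j \geq 0$ and contradicting $\lambda_j = -1$.

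The part I expect to require the most care is the construction of $v$ in the forward direction: one needs integer coefficients producing an honest representation while carrying a negative coefficient on $\rho_j$. The uniform choice $\lambda_i = N$ for $i \neq j$ works precisely because the hypothesis guarantees a positive contribution from the other atoms on every $S$-irreducible appearing in $\rho_j$, which is the essential content of the obstruction to factoriality.
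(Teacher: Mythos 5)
Your proof is correct, and it reaches the statement by a slightly different route than the paper. In the direction assuming the special basis exists, the paper only applies the witness-constituent argument to irreducible $\Ff$--invariant representations, concludes there are at most $k$ of them, and then invokes Corollaries \ref{Bound} and \ref{CharacterizationNumber} to get factoriality; you instead apply it to an arbitrary $\Ff$--invariant representation and obtain $\Rep(\Ff)=\N\{\rho_1,\ldots,\rho_k\}$ directly, which gives freeness, factoriality and the ``moreover'' claim in one stroke without the counting corollaries. In the converse direction the paper argues that if the condition failed for $\rho_j$, then $\rho_j$ would be a subrepresentation of a non-negative combination of the other atoms, producing two distinct factorizations of the same element; your construction of $v=-\rho_j+N\sum_{i\neq j}\rho_i$ is the same absorption idea, but you derive the contradiction from the sign of the coefficient of $\rho_j$ rather than from non-uniqueness of factorization. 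The one ingredient your version leans on that the paper's proof of this proposition does not is the identification of $\Rep(\Ff)$ with the set of elements of $R(\Ff)$ having non-negative multiplicities in the $S$-irreducible basis: this is true, but it deserves its one-line justification, namely that an element of $R(\Ff)$ is a difference of $\Ff$--invariant characters, hence constant on $\Ff$--conjugacy classes, so if its $S$-multiplicities are non-negative it is a genuine representation which is $\Ff$--invariant by the character criterion (condition (5) of the lemma in Section \ref{SectionReview}); this is the same principle the paper uses implicitly in Remark \ref{Criterio}. With that remark added, your argument is complete, and arguably cleaner in the forward direction since it isolates exactly where the hypothesis ``$\Irr(\rho_j)$ is covered by the others'' is used.
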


\begin{proof}
Given such a basis $\{ \rho_1, \ldots, \rho_k \}$, if $\rho$ is an irreducible
$\Ff$--invariant representation, we must have
\[ \rho = n_1 \rho_1 + \ldots + n_k \rho_k \]
for some $n_j \in \Z$. Since $\rho_j$ contains an irreducible subrepresentation
which is not shared with any $\rho_i$ for $i \neq j$, the coefficient $n_j$ must
be nonnegative. Since $\rho$ is irreducible, we must have that $\rho = \rho_j$
for some $j$ and therefore the number of irreducible $\Ff$--invariant representations
is at most $k$. By Corollary \ref{Bound}, this number is exactly $k$ and thus $\Rep(\Ff)$ 
is factorial by Corollary \ref{CharacterizationNumber}. We also obtain the last claim.

On the other hand, if $\Rep(\Ff)$ is factorial, by Corollary \ref{CharacterizationNumber},
the number of irreducible $\Ff$--invariant representations equal the rank of $R(\Ff)$, which
is finite. Let $\rho_1, \ldots, \rho_k$ be the irreducible $\Ff$--invariant representations. 
Given $\rho_j$, if each of its irreducible subrepresentations appeared as a subrepresentation 
of some $\rho_i$ with $i \neq j$, then $\rho_j$ would be a subrepresentation
of 
\[ \sum_{i \neq j} n_i \rho_i \]
for certain nonnegative integers $n_i$. And this would contradict the fact that $\Rep(\Ff)$ 
is factorial. Hence $\{ \rho_1, \ldots, \rho_k \}$ is a basis of $R(\Ff)$ which satisfies the desired property.
\end{proof}

\begin{remark}
\label{Criterio}
We can use a similar setup to prove an irreducibility criterion. Assume that $R(\Ff)$ has a basis 
$\{ \rho_1, \ldots, \rho_k \}$ of representations and $\rho_j$ satisfies
\[ \Irr(\rho_j) - \bigcup_{i \neq j} \Irr(\rho_i) \neq \emptyset \]
Let us show that $\rho_j$ is an irreducible $\Ff$--invariant representation.

Let $\alpha \in \Irr(\rho_j) - \bigcup_{i \neq j} \Irr(\rho_i)$ and let $\rho_j = \gamma + \epsilon$, 
where $\gamma$ and $\epsilon$ are $\Ff$--invariant representations. Since $\rho_j$ is the only representation in the basis
that contains $\alpha$ as a subrepresentation, we may assume that $\gamma$ contains $\alpha$ with the same multiplicity as
in $\rho_j$. When we write $\gamma$ as an integral lineal combination of the $\rho_i$, the
coefficient of $\rho_j$ must be one, hence $\gamma = \rho_j + \gamma'$. Then $ \gamma' + \epsilon = 0$ from where $\gamma'=0=\epsilon$
and so $\rho_j = \gamma$.
\end{remark}

We say that two virtual representations of $S$ are disjoint if their decompositions as integral linear combinations
of irreducible representations of $S$ do not share any irreducible representation. More generally, given a finite 
number of virtual representations of $S$, we say that they are disjoint if any two of them are disjoint.

\begin{proposition}
\label{CasoDisjuntas}
If $R(\Ff)$ has a basis of disjoint virtual representations of $S$, then $\Rep(\Ff)$ is
factorial.
\end{proposition}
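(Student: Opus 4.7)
The plan is to upgrade the given basis of disjoint virtual representations into a basis of actual $\Ff$--invariant representations satisfying the hypothesis of Proposition \ref{CharacterizationFuerte}, at which point factoriality follows immediately. Write each basis element in terms of the irreducible representations $\chi$ of $S$:
\[ \rho_j = \sum_\chi a_{j\chi} \chi, \qquad a_{j\chi} \in \Z. \]
Disjointness means that for each $\chi$, at most one index $j$ has $a_{j\chi} \neq 0$.

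The main step is to show that every irreducible $\Ff$--invariant representation $\pi$ equals $\pm \rho_j$ for a unique $j$. Expressing $\pi = \sum_j n_j \rho_j$ with $n_j \in \Z$, disjointness forces the multiplicity of any $\chi$ in $\pi$ to be precisely $n_j a_{j\chi}$, where $j$ is the unique index with $\chi$ in the support of $\rho_j$. Since $\pi$ is an actual representation of $S$, each such product must be $\geq 0$. If the coefficients of $\rho_j$ in its irreducible expansion have mixed signs, this forces $n_j = 0$; if they are all non-negative (resp.\ non-positive), then $n_j \geq 0$ (resp.\ $n_j \leq 0$). Replacing $\rho_j$ by $-\rho_j$ whenever its expansion is non-positive yields another basis of disjoint virtual representations, and after this one-time sign adjustment we may assume $n_j \geq 0$ for every $j$, with $n_j = 0$ whenever $\rho_j$ has mixed signs. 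Thus $\pi$ is expressed as a non-negative integer combination of those $\rho_j$ that are actual $\Ff$--invariant representations, and irreducibility of $\pi$ forces exactly one $n_j$ to equal $1$ with the rest vanishing, giving $\pi = \rho_j$.

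To finish, count: the number of irreducible $\Ff$--invariant representations is at most the number of basis elements that are actual representations, which is at most $k = \rk R(\Ff)$. Combined with Corollary \ref{Bound}, equality holds throughout, so every $\rho_j$ is already an actual $\Ff$--invariant representation and in fact irreducible. Disjointness guarantees that each $\rho_j$ contains an irreducible of $S$ not appearing in any other $\rho_i$, so Proposition \ref{CharacterizationFuerte} applies and yields factoriality of $\Rep(\Ff)$. The delicate point is the sign-flip adjustment together with ruling out mixed-sign basis vectors via the count from Corollary \ref{Bound}; everything else reduces to bookkeeping within the disjoint supports.
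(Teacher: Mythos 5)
Your proof is correct and follows essentially the same route as the paper: normalize signs in the disjoint basis, use disjointness plus positivity of multiplicities to show mixed-sign basis elements receive zero coefficient and representation-type elements nonnegative coefficients, identify the irreducible $\Ff$--invariant representations among the basis elements, and close the count with Corollary \ref{Bound}. The only cosmetic difference is that you finish via Proposition \ref{CharacterizationFuerte} where the paper invokes Corollary \ref{CharacterizationNumber}; both are immediate at that point.
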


\begin{proof}
We first modify this basis by replacing each element of the form $-\rho$, where $\rho$ is a representation, 
with $\rho$. Let $\alpha$ be a virtual representation in this new basis of $R(\Ff)$ which is not a representation. 
If $\beta$ is an $\Ff$--invariant representation of $S$ and we express it in this basis, the coefficient of 
$\alpha$ must be zero because $\beta$ is a representation. Therefore all the $\Ff$--invariant representations 
of $S$ are generated by the set $X$ of elements in this basis which are representations. In particular, $X$ is
the set of irreducible $\Ff$--invariant representations of $S$ by Remark \ref{Criterio}. This shows that the number of irreducible $\Ff$--invariant
representations is at most the rank of $R(\Ff)$, hence equal to the rank by Corollary \ref{Bound}. Therefore
$\Rep(\Ff)$ is factorial by Corollary \ref{CharacterizationNumber}.
\end{proof}

Proposition \ref{CasoDisjuntas} is not a characterization of the
factoriality of $\Rep(\Ff)$ as the following
example shows.

\begin{example}
\label{ExampleSigma4}
The representation ring of $\Ff_{D_8}(\Sigma_4)$ was determined in Example 4.5.4 of \cite{G}.
Let $\{1,X,Y,Z,XY\}$ be the set of irreducible representations of $D_8$, where $Z$ is the two-dimensional
irreducible representation. Then $R_{\Sigma_4}(D_8)$ is the free abelian group generated by $ 1$, 
$X+Z$, $Y+Z$ and $XY$. By Proposition \ref{CharacterizationFuerte},
the monoid $\Rep_{\Sigma_4}(D_8)$ is factorial and these are all the possible
irreducible $\Sigma_4$--invariant representations.

However, it is not possible to find a basis for $R_{\Sigma_4}(D_8)$ of disjoint virtual representations.
Assume there is a basis $ \{ v_1, v_2, v_3, v_4 \}$ of disjoint virtual representations. Let us express the basis
we already have in terms of this basis. We can assume $\pm X$ appears in the decomposition of $v_1$ and then the coefficient of $v_1$ in $Y+Z$, $1$ and $XY$
would be zero. The analogous conditions must hold, up to reordering, for the pairs $(Y,v_2)$, $(1,v_3)$ and $(XY,v_4)$.
In particular $ X+Z = \pm v_1$ and $Y+Z = \pm v_2$, which contradicts the disjointness of $v_1$ and $v_2$.
\end{example} 

However, Proposition \ref{CasoDisjuntas} is useful in two extreme cases. The following terminology
is taken from the Introduction of \cite{HKKS}.

\begin{definition}
A fusion system $\Ff$ is transitive if there are only two $\Ff$--conjugacy
classes of elements of $S$.
\end{definition}

\begin{lemma}
\label{transitive}
If $\Ff$ is transitive, then $\Rep(\Ff)$ is factorial.
\end{lemma}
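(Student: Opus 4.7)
The plan is to exhibit a basis of $R(\Ff)$ consisting of disjoint representations and then invoke Proposition \ref{CasoDisjuntas}. Since $\Ff$ is transitive, the $\Ff$--conjugacy classes of elements of $S$ are exactly $\{1\}$ and $S \setminus \{1\}$, so by Corollary 2.2 of \cite{BC}, $R(\Ff)$ is free abelian of rank two.

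The natural candidates are the trivial representation $1$ and the reduced regular representation $\bar{r}_S = r_S - 1$, both of which are $\Ff$--invariant as recorded in Section \ref{SectionReview}. Regarded as representations of $S$, they have disjoint supports in $\Irr(S)$: the decomposition of $1$ consists only of the trivial irreducible of $S$, while $\bar{r}_S$ is the sum of every non-trivial irreducible of $S$ with strictly positive multiplicity. In particular, they are $\Z$--linearly independent in $R(S)$ and thus in $R(\Ff)$.

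The substantive step is to verify that $\{1, \bar{r}_S\}$ is a full $\Z$--basis of $R(\Ff)$, not merely a rank-two independent pair spanning a finite-index subgroup. I would argue at the level of characters. Any $\Ff$--invariant virtual character $\chi$ of $S$ is determined by the integers $a = \chi(1)$ and $b = \chi(g)$ for $g \neq 1$, and its inner product with the trivial character of $S$ equals $(a + (|S|-1) b)/|S|$; since $\chi$ is a virtual character, this multiplicity must be an integer, forcing $|S|$ to divide $a - b$. One then writes $\chi = b \cdot 1 + \tfrac{a-b}{|S|} \cdot r_S$, which exhibits $\chi$ as an integer combination of $1$ and $r_S$, and hence of $1$ and $\bar{r}_S$. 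With this basis established, Proposition \ref{CasoDisjuntas} immediately yields the factoriality of $\Rep(\Ff)$.
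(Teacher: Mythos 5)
Your route is in substance the same as the paper's: show that every $\Ff$--invariant virtual character is an integral combination of the trivial character and the regular character, so that $R(\Ff)=\Z 1\oplus \Z\,\widetilde{\reg_S}$ is a basis of disjoint (actual) characters, and then quote Proposition \ref{CasoDisjuntas}. The gap sits exactly in the clause you pass over: you call $b=\chi(g)$ (for $g\neq 1$) an \emph{integer}. For a virtual character of $S$ this is not automatic — a priori $\chi(g)$ is only an algebraic integer, a $\Z$--combination of sums of roots of unity — and your subsequent deduction genuinely uses it: from $(a+(|S|-1)b)/|S|=k\in\Z$ you get $a-b=|S|(k-b)$, which yields $|S|\mid a-b$ in $\Z$ only once you already know $b\in\Z$. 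This integrality is precisely what the paper treats as the substantive step of the proof: it pairs $\chi$ both with the trivial character and with a nontrivial one-dimensional character of $S$ (which exists because $S$ is a nontrivial $p$-group, hence has nontrivial abelianization), obtaining that $\tfrac{m-\bar n}{|S|}+\bar n$ and $\tfrac{m-\bar n}{|S|}$ are both integers, whence $n\in\Z$ and $|S|\mid m-n$.

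The gap is real but easily closed, even inside your own computation: integrality of the multiplicity of the trivial character gives $(|S|-1)b=|S|k-a\in\Z$, so $b\in\Q$; since $b$ is an algebraic integer, $b\in\Z$, and then $|S|\mid a-b$ as you want. (Alternatively, Galois invariance: the Galois action on character values sends $\chi(g)$ to $\chi(g^k)$ with $g^k\neq 1$, and $\chi$ is constant on $S\setminus\{1\}$, so $b$ is Galois-fixed, hence rational, hence integral.) With that one observation added, your argument is correct and coincides with the paper's, differing only in using a single inner product plus an algebraic-integrality remark where the paper uses two inner products to stay entirely elementary.
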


\begin{proof}
In this proof we work with the equivalent description of $R(\Ff)$ as the
Grothendieck group of $\Ff$--invariant virtual characters.

The two conjugacy classes must be the class of $1$ and the class of all
nontrivial elements. Let $\chi \in R(\Ff)$ and let $ m = \chi(1)$. 
Since $\chi$ is $\Ff$--invariant, we must have $\chi(s) = n$ for all $s \neq 1$.
We know that $m$ is an integer. We will prove that $n$ is an integer as well. 
Note that
\[ (\chi,1) = \frac{1}{|S|} \sum _{s \in S} \overline{\chi(s)} = \frac{1}{|S|} ( m + \bar{n}(|S|-1)) =
 \frac{m-\bar{n}}{|S|} + \bar{n} \]
is an integer, or equivalently
\[ \frac{m-n}{|S|}+n \]
On the other hand, since $S$ is a $p$-group, it is nilpotent and so $[S,S] \neq S$. Therefore
its abelianization is not trivial and $S$ has a nontrivial $1$-dimensional representation $\rho$.
We denote its character by $\rho$ as well. Then
\[ (\chi,\rho) = \frac{1}{|S|}\left( m+ \bar{n} \sum_{s \neq 1} \rho(s) \right)
= \frac{1}{|S|}\left( m + \bar{n} \sum_{s \in S} \rho(s) - \bar{n} \rho(1) \right) = \frac{1}{|S|}(m-\bar{n}) \]
is also an integer, hence
\[ \frac{m-n}{|S|} \]
is an integer. Therefore $n$ is an integer and $|S|$ divides $m-n$. Let $m-n = a|S|$ with $a \in \Z$. Then
\[ \chi = (a+n)1 + a \widetilde{\reg_S} \]
where $\widetilde{\reg_S}$ is the reduced regular representation. Since the reduced regular 
representation does not contain the trivial representation as a subrepresentation, this shows
\[ R(\Ff) = \Z 1 \oplus \Z \widetilde{\reg_S} \]
Thus $R(\Ff)$ has a basis of disjoint characters and it is factorial by Proposition \ref{CasoDisjuntas}.
\end{proof}

This second extreme case corresponds to the case when every morphism is
a restriction of an automorphism of $S$.

\begin{lemma}
\label{EssentialRank}
If the essential rank of $\Ff$ is zero, then $\Rep(\Ff)$ is factorial.
\end{lemma}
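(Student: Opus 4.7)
The plan is to reduce $\Ff$--invariance to invariance under the single finite group $\Aut_{\Ff}(S)$, and then exhibit a basis of $R(\Ff)$ consisting of pairwise disjoint representations so that Proposition \ref{CasoDisjuntas} finishes the job.

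First, since the essential rank of $\Ff$ is zero, there are no $\Ff$--essential subgroups, and condition (4) of the lemma in Section \ref{SectionReview} collapses to: a representation $\rho$ is $\Ff$--invariant if and only if $\rho$ and $\rho \circ f$ are isomorphic for every $f \in \Aut_{\Ff}(S)$. For the same reason, two elements of $S$ are $\Ff$--conjugate if and only if they lie in the same $\Aut_{\Ff}(S)$--orbit on the set of $S$--conjugacy classes of $S$.

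Next, let $A = \Aut_{\Ff}(S)$, which acts on the set $\Irr(S)$ of irreducible characters of $S$ by $f \cdot \chi = \chi \circ f^{-1}$. Writing $\rho = \sum_{\chi \in \Irr(S)} n_{\chi}\, \chi$, the condition $\rho \cong \rho \circ f$ for all $f \in A$ translates into $n_{\chi} = n_{f \cdot \chi}$ for all $f \in A$, i.e.\ the multiplicities are constant on $A$--orbits in $\Irr(S)$. Thus, letting $O_1,\dots,O_r$ denote the $A$--orbits on $\Irr(S)$ and setting $\sigma_j = \sum_{\chi \in O_j} \chi$, every $\Ff$--invariant virtual representation is a unique $\Z$--linear combination of $\sigma_1,\dots,\sigma_r$, and these are pairwise disjoint by construction. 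Hence $\{\sigma_1,\dots,\sigma_r\}$ is a $\Z$--basis of $R(\Ff)$ consisting of honest representations, and in particular of disjoint virtual representations.

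A sanity check that the count matches: by Brauer's permutation lemma, the number of $A$--orbits on $\Irr(S)$ equals the number of $A$--orbits on conjugacy classes of $S$, which by the reduction above is exactly the number of $\Ff$--conjugacy classes of elements of $S$, i.e.\ the rank of $R(\Ff)$. This confirms that the $\sigma_j$ really do form a basis rather than merely a spanning set.

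Finally, apply Proposition \ref{CasoDisjuntas} to the basis $\{\sigma_1,\dots,\sigma_r\}$ to conclude that $\Rep(\Ff)$ is factorial. The only conceptual step is the initial reduction to $\Aut_{\Ff}(S)$ coming from the hypothesis of zero essential rank; once that is in hand, the rest is a transparent orbit count plus Brauer's permutation lemma, and no further obstacle arises.
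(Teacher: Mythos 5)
Your argument is correct and is essentially the paper's own proof: reduce $\Ff$--invariance to $\Aut_{\Ff}(S)$--invariance using the zero essential rank hypothesis, take the orbit sums of $\Aut_{\Ff}(S)$ acting on $\Irr(S)$ as a disjoint basis of $R(\Ff)$, and conclude by Proposition \ref{CasoDisjuntas}. The Brauer permutation lemma check is a harmless extra not present in (and not needed for) the paper's argument.
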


\begin{proof}
If the essential rank of $\Ff$ is zero, then a representation $\rho$
of $S$ is $\Ff$--invariant if and only if $ \rho \circ f$ is equivalent
to $\rho$ for every $f \in \Aut_{\Ff}(S)$. Note that $\rho$ is irreducible
if and only if $\rho \circ f$ is irreducible. Therefore $\Aut_{\Ff}(S)$ 
acts on the set $\Irr(S)$ of irreducible representations of $S$. Then
\[ R(\Ff) = R(S)^{\Aut_{\Ff}(S)} = \Z[\Irr(S)]^{\Aut_{\Ff}(S)}  \cong \Z[\Irr(S)/{\Aut_{\Ff}(S)}] \]
This shows that $R(\Ff)$ has a basis of disjoint characters and therefore 
it is factorial by Proposition \ref{CasoDisjuntas}.
\end{proof}

If $S$ is abelian, the essential rank of $\Ff$ is automatically zero and
$p$-groups of order at most $p^2$ are abelian, so it is natural to next 
treat the case when $|S|=p^3$. We will do this in the next section. 

\begin{remark}
If $\Rep(\Ff)$ is factorial and $\rho$ is an irreducible $\Ff$--invariant representation,
then $\rho$ is a subrepresentation of $\reg_S^n$ for some $n$, hence $\reg_S^n = \rho \oplus \alpha$. 
Since $\reg_S$ is $\Ff$--invariant, so is $\alpha$ and so we can decompose it as a direct 
sum of irreducible $\Ff$--invariant representations
\[ \reg_S^n = \rho \oplus \bigoplus \rho_i^{k_i} \]
On the other hand, since $\reg_S$ is $\Ff$--invariant, it has a decomposition
\[ \reg_S = \bigoplus \omega_i^{r_i} \]
where the $\omega_i$ are irreducible $\Ff$--invariant representations. By the 
uniqueness of factorization, $\rho = \omega_j$ for some $j$, in particular
$\rho$ is a subrepresentation of $\reg_S$.
\end{remark}

The argument in the previous remark shows that any irreducible$\Ff$--invariant representation is a subrepresentation
of the regular representation when $\Rep(\Ff)$ is factorial. Apart from the examples from \cite{G} and
\cite{Re}, we enumerate in the appendix several examples of saturated fusion systems where $\Rep(\Ff)$ is not factorial,
found using the software GAP. We have checked that many of them still satisfy that any irreducible $\Ff$--invariant 
representation is a subrepresentation of the regular representation (we did not include these details in the paper). 
Hence it is reasonable to pose the following conjecture.

\begin{conjecture}
\label{Regular}
Every irreducible $\Ff$--invariant representation of $S$ is a subrepresentation
of the regular representation.
\end{conjecture}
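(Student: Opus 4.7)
The plan is to attempt a proof by contradiction via a \emph{truncation} construction. Let $\rho$ be an irreducible $\Ff$--invariant representation of $S$, and decompose it as $\rho = \bigoplus_{\chi \in \Irr(S)} m_\chi(\rho)\,\chi$, where $m_\chi(\rho)$ is the multiplicity of the irreducible $S$--representation $\chi$. Since $\reg_S$ contains each $\chi$ with multiplicity exactly $\chi(1)$, the conjecture is equivalent to the assertion that $m_\chi(\rho)\leq \chi(1)$ for every $\chi$. Assume for contradiction that $m_{\chi_0}(\rho) > \chi_0(1)$ for some $\chi_0$.

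Define the candidate truncation
\[
\rho' \;=\; \bigoplus_{\chi \in \Irr(S)} \min\bigl(m_\chi(\rho),\,\chi(1)\bigr)\,\chi,
\]
a genuine subrepresentation of both $\rho$ and $\reg_S$ by construction. If $\rho'$ were $\Ff$--invariant, then $\rho - \rho'$ would also be $\Ff$--invariant — its character is the difference of two characters that are constant on $\Ff$--conjugacy classes — and a genuine representation since $\rho'$ sits inside $\rho$ componentwise. Because $\rho' \neq 0$ (otherwise every $m_\chi(\rho)$ vanishes, contradicting $\rho\neq 0$) and $\rho' \neq \rho$ (since $m_{\chi_0}(\rho')=\chi_0(1)<m_{\chi_0}(\rho)$), this would contradict the irreducibility of $\rho$ and prove the conjecture.

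The main obstacle — and presumably the reason the authors left this as a conjecture — is that $\rho'$ need not be $\Ff$--invariant in general. The $\Ff$--invariance constraints take the form $\sum_\chi m_\chi\bigl(\chi(s)-\chi(t)\bigr) = 0$ for each $\Ff$--conjugate pair $s,t \in S$, and these linear equations typically mix multiplicities of irreducibles of differing dimensions in ways incompatible with the pointwise $\min$ operation. A refinement of the strategy would be to replace $\rho'$ by the sum of all $\Ff$--invariant subrepresentations of $\rho$ that embed in $\reg_S$, and to try to show this sum is nonzero whenever $\rho \not\subseteq \reg_S$. A fallback approach is induction on $|S|$ via restriction to a strongly $\Ff$--closed proper subgroup $Q$, using $\reg_S \cong \Ind_Q^S \reg_Q$; however, irreducible $\Ff$--invariant representations need not restrict to irreducibles in $\Rep(\Ff|_Q)$, so transferring multiplicity bounds between the two sides requires delicate bookkeeping. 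I expect reconciling the combinatorial truncation operation with the linear-algebraic structure of $\Ff$--invariance to be the principal technical barrier.
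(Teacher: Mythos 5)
This statement is stated in the paper as a \emph{conjecture}: the authors do not prove it in general, only in the special case where $\Rep(\Ff)$ is factorial (via the remark preceding the conjecture, which writes $\reg_S^n = \rho\oplus\alpha$, decomposes both sides into irreducible $\Ff$--invariant summands, and invokes uniqueness of factorization to conclude $\rho$ occurs in $\reg_S$ itself). The interesting cases are precisely the non-factorial ones, for which the paper offers only computational evidence. So there is no complete proof in the paper to compare against, and your proposal does not supply one either.

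Your own write-up correctly identifies the fatal gap: the truncation $\rho' = \bigoplus_\chi \min\bigl(m_\chi(\rho),\chi(1)\bigr)\chi$ has no reason to be $\Ff$--invariant, and without that the contradiction with irreducibility never materializes. The preliminary reductions are fine --- the conjecture is indeed equivalent to $m_\chi(\rho)\leq\chi(1)$ for all $\chi$, and if $\rho'$ \emph{were} a nonzero proper $\Ff$--invariant subrepresentation then the cancellation property would make $\rho-\rho'$ an $\Ff$--invariant complement and contradict irreducibility. But the $\Ff$--invariance conditions are linear constraints $\sum_\chi m_\chi(\chi(s)-\chi(t))=0$ across $\Ff$--conjugate pairs, and the coordinatewise $\min$ does not respect them; already in the paper's $\Sigma_9$ example the atoms such as $b_1+b_2+c_1+c_2+2Z$ show that multiplicities of distinct irreducibles are coupled in ways a naive truncation destroys. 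Your two fallback suggestions (summing all $\Ff$--invariant subrepresentations of $\rho$ landing in $\reg_S$, or induction via a strongly $\Ff$--closed subgroup) are likewise only sketches with no argument that the relevant object is nonzero or that multiplicity bounds transfer. In short: the proposal is an honest analysis of why an obvious strategy fails, not a proof, and the statement remains open in the paper as well.
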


\section{Fusion systems over $p$-groups of small order}
\label{SectionSmall}

In this section we determine that the monoids of 
fusion-invariant representations for saturated fusion systems 
over $p$-groups of order $p^3$ are all factorial.

We begin with $p=2$. Let $S$ be a group of order eight and
$\Ff$ a saturated fusion system over $S$. If $S$ is abelian,
then the essential rank of $\Ff$ is zero, hence $\Rep(\Ff)$
is factorial by Lemma \ref{EssentialRank}. It is well known
that, up to isomorphism, there are three saturated fusion systems over $D_8$, the fusion 
systems of $D_8$, $S_4$ and $A_6$, and two over $Q_8$, corresponding 
to the groups $Q_8$ and $SL_2(\F_3)$. For both $D_8$ and $Q_8$, let 
us denote by $X$, $Y$ and $XY$ the one-dimensional irreducible 
representations and by $Z$ the two-dimensional irreducible
representation. The monoid of fusion-invariant representations is factorial
in all these cases:

\begin{itemize}
\item When the group is $D_8$ or $Q_8$, because it is
the monoid of representations of a group.
\item For $\Sigma_4$, we proved it in Example \ref{ExampleSigma4}.
\item For $A_6$, there are three $A_6$--conjugacy classes of
elements of $D_8$. It is easy to check that the irreducible
$A_6$--invariant representations are $1$, $X+XY+Z$ and $Y+Z$.
\item For $SL_2(\F_3)$, there are three $SL_2(\F_3)$--conjugacy classes of
elements of $Q_8$. It is easy to check that the irreducible
$SL_2(\F_3)$--invariant representations are $1$, $X+Y+XY$ and $Z$.
\end{itemize}

For the rest of the section we assume that $p$ is an odd prime. We first
consider the extraspecial group $S = p_+^{1+2}$ with presentation
\[ \langle a, b, c \mid a^7 = 1, b^7 = 1, c^7 = 1, [a,b]=c, [a,c]=1, [b,c] = 1 \rangle \]
Recall that saturated fusion systems over $p_+^{1+2}$ are determined by the automorphism group of 
$p_+^{1+2}$, the elementary abelian groups of rank $2$ which are centric radical and their automorphisms groups 
(see Corollary 4.5 in \cite{RV}). Therefore a representation $\rho$ of $S$ is $\Ff$--invariant if and only if
the following two conditions are satisfied:
\begin{enumerate}
\item $f^*\rho \cong \rho$ for all $f \in \Aut_{\Ff}(S)$.
\item $h^*\rho\vert_V \cong \rho\vert_V$ for all $h \in \Aut_{\Ff}(V)$ for each
$\Ff$--centric radical subgroup $V$ which is elementary abelian of rank $2$.
\end{enumerate}
The first condition states that $\rho$ is a fixed point of the action
of $\Aut_{\Ff}(S)$ on $\Rep(S)$. As in the proof of Lemma \ref{EssentialRank},
the fixed points form a free monoid, a basis given by adding over the orbits 
of the irreducible representations of $S$ under the action of $\Aut_{\Ff}(S)$.

By Theorem 5.5.4 in \cite{Go}, the group $S$ has $p^2$ irreducible one-dimensional representations and 
$p-1$ irreducible representations of dimension $p$. The one-dimensional representations come from the quotient
\[ S \to S/Z(S) \cong \Z/p \times \Z/p \]
and are denoted by $x^iy^j$, just like the corresponding representations of $\Z/p \times \Z/p$. Let $\omega$ denote a primitive $p^2$-root of unity. 
The remaining irreducible representations $\phi_j$ for $j=1,\ldots,p-1$ are described by their values on $a$ and $b$. Namely,
\[ \phi_j(a) = \diag (\omega^{j(1+(p-1)p)},\ldots,\omega^{j(1+p)},\omega^j,1) \]
and $\phi_j(b)$ is the linear transformation that sends $e_1$ to $e_p$ and $e_n$ to $e_{n-1}$ if $n \geq 2$. Here $\{ e_1, \ldots, e_p \}$
denotes the standard basis of $\C^p$. Note that $\phi_j(c) = w^j I $.
 
To determine the orbits of the irreducible representations of $S$ under the action of $\Aut_{\Ff}(S)$, we can use the following lemma.

\begin{lemma}
Let $f$ be an automorphism of $S$ represented by the matrix $ M = \left( \begin{array}{cc} r & r' \\ s & s' \end{array} \right)$
under the isomorphism $ \Out(S) \cong GL_2(\F_p)$. Then $ f^*(x^i y^j) = x^{ir+js} y^{ir'+js'}$
and $f^*(\phi_j) = \phi_{j\det(M)}$, where all the indices are taken mod $p$.
\end{lemma}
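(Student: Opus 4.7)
The plan is to reduce everything to standard facts about extraspecial $p$-groups of order $p^3$, and then separately treat the one-dimensional and $p$-dimensional irreducible representations. First I would observe that inner automorphisms of $S$ act trivially on isomorphism classes of representations (the intertwiner is $\rho(g)$), so it suffices to work with $\Out(S)$. Unpacking the convention of the isomorphism $\Out(S) \cong GL_2(\F_p)$, the matrix $M$ represents the induced action of $f$ on $S/Z(S) \cong \F_p^2$ in the basis $\{\bar a, \bar b\}$, so that $f(a) \equiv a^r b^s$ and $f(b) \equiv a^{r'} b^{s'}$ modulo $Z(S) = \langle c \rangle$.

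For the one-dimensional representations, I would note that each $x^iy^j$ factors through the abelianization $S/[S,S] = S/Z(S)$. Since elements of $Z(S)$ are killed, the values $(x^iy^j)(f(a))$ and $(x^iy^j)(f(b))$ depend only on the exponents of $a$ and $b$ in $f(a)$ and $f(b)$ modulo the center. A direct evaluation using $(x^iy^j)(a) = \zeta_p^{\,i}$ and $(x^iy^j)(b) = \zeta_p^{\,j}$ then yields $f^*(x^iy^j) = x^{ir+js} y^{ir'+js'}$, where the exponents are read off from the columns of $M$.

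For the $p$-dimensional representations, the key point is that $\phi_j$ is determined up to isomorphism by its central character $c \mapsto w^j$, since a count of irreducibles forces the $p-1$ faithful characters of $Z(S)$ to correspond bijectively to the $p-1$ representations of dimension $p$. Thus it suffices to compute $f(c)$. Writing $c = [a,b]$ and using that $S$ has nilpotency class $2$, so that the commutator descends to a bilinear alternating pairing $S/Z(S) \times S/Z(S) \to Z(S)$, I would expand
\[ f(c) = [f(a), f(b)] = [a^r b^s, a^{r'} b^{s'}] = [a,b]^{rs' - sr'} = c^{\det M}. \]
Hence $f^*(\phi_j)(c) = \phi_j(c^{\det M}) = w^{j \det M} I$, which identifies $f^*(\phi_j)$ with $\phi_{j\det M}$.

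The only genuine obstacle is bookkeeping: one has to keep track of whether $M$ represents $f$ in column-vector or row-vector convention, and make sure the signs in the commutator identity $[a,b]^{rs'-sr'}$ match the definition $\det M = rs' - r's$. Once the convention matches what is stated (namely $f(a) \equiv a^r b^s$, $f(b) \equiv a^{r'} b^{s'}$ mod $Z(S)$), both formulas fall out of the computations above.
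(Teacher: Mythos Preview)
Your proof is correct and complete. The paper states this lemma without proof, so there is no argument to compare against; you have supplied the standard justification. The two ingredients you identify --- that one-dimensional representations factor through $S/Z(S)$ and are therefore controlled by the induced action of $M$ on the abelianization, and that the $p$-dimensional irreducibles are determined by their central character together with the computation $f(c)=[f(a),f(b)]=c^{\det M}$ via bilinearity of the commutator in a class-$2$ nilpotent group --- are exactly what is needed. Your remark that one must check the convention (whether $M$ acts by columns or rows on $\bar a,\bar b$) is well taken, since the formula $f^*(x^iy^j)=x^{ir+js}y^{ir'+js'}$ pins down that convention.
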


Hence we will start determining the monoid $\Rep(S)^{\Out_{\Ff}(S)}$ 
and then impose the conditions given by the elementary abelian subgroups. 
The elementary abelian subgroups of rank two of $S$ are the subgroups $V_i$ 
generated by $c$ and $ab^i$ for $i=0, \ldots, p-1$, and the subgroup $V_p$ generated by $c$ and $b$.
These subgroups have a natural action of $GL_2(\F_p)$ by identifying each of them with 
$\F_p^2$ in such a way that the generating elements mentioned above form the standard basis. 
Note that the irreducible representations of an elementary abelian group
$V$ of rank two can be identified with elements of $\Hom_{\F_p}(V,\F_p)$
and the action of $\Aut(V) \cong GL_2(\F_p)$ under this identification 
is the dual action, which corresponds to multiplication by the transposed matrix. 

We now go through the all the possibilities in Tables 1.1 and 1.2 from \cite{RV} to determine 
the factoriality of the monoid of fusion-invariant representations in each case. 
\newline

\noindent \begin{bf}Case 1:\end{bf} $ G = S \rtimes W$, where $p$ does not divide the order of $W$. It follows
from Lemma \ref{EssentialRank}.
\newline

\noindent \begin{bf}Case 2:\end{bf} $ G = (\Z/p)^2 \rtimes (SL_2(\F_p) \rtimes \Z/r)$, where $r$ divides $p-1$.
We have $\Out_G(S) \cong \Z/(p-1) \times \Z/r$, and its elements are diagonal matrices. Note that the action of
$\Z/r$ partitions the set of representations of the form $y^j$ with $j \neq 0$ into $ d = (p-1)/r$ orbits.
Let $Y_i$, for $i=1,\ldots,d$, be the sum of the representations in each of these disjoint sets. A similar argument
applies for representations of the form $x^i y^j $ with $ij \neq 0$, obtaining representations $B_i$ for $i=1,\ldots,d$.
A basis for  $\Rep(S)^{\Out_G(S)}$ is given by
\[ \left\{ 1, X = \sum_{i=1}^{p-1} x^i, Y_1,\ldots,Y_d, B_1,\ldots,B_d, Z = \sum_{j=1}^{p-1} \phi_j \right\} \]
Now let $\rho$ be a $G$--invariant representation of $S$. Then
\[ \rho = q1+lX+\sum_{i=1}^d s_iY_i+\sum_{i=1}^d t_i B_i +uZ \]
The restriction of $\rho$ to the subgroup $V_0$ generated by $a$ and $c$ is
\[ (q+s_1r+\ldots+s_dr)1+l \left( \sum_{i=1}^{p-1} x^i \right) + \sum_{i=1}^d t_i r \left( \sum_{i=1}^{p-1} x^i \right) + u \left( \sum_{i=0,j=1}^{p-1} x^i y^j \right) \]
Since $\Aut_G(V_0) \cong SL_2(\F_p) \rtimes \Z/r$, the elements $x$ and $xy$ are related through the action of $SL_2(\F_p)$
and therefore $ u = l+t_1r+\ldots+t_d r$. Thus
\[ \rho = q1+l(X+Z)+\sum_{i=1}^d s_iY_i+\sum_{i=1}^d t_i (B_i+rZ) \]
and hence $\Rep_G(S)$ is factorial by Proposition \ref{CharacterizationFuerte}.
\newline

\noindent \begin{bf}Case 3:\end{bf} $ G = L_3(p)$, where $3$ does not divide $p-1$. In this case 
$\Out_{L_3(p)}(S) \cong \Z/(p-1) \times \Z/(p-1)$, and its elements are the diagonal 
matrices in $GL_2(\F_p)$. A basis for $\Rep(S)^{\Out_{L_3(p)}(S)}$ is given by
\[ \left\{ 1, X, Y = \sum_{i=1}^{p-1} y^i, M = \sum_{ij \neq 0} x^i y^j, Z  \right\} \]
Now let $\rho$ be an $L_3(p)$--invariant representation of $S$. Then
\[ \rho = q1+rX+sY+tM+uZ \]
The restriction of $\rho$ to the subgroup $V_0$ generated by $a$ and $c$ is
\[ q1+r \left( \sum_{i=1}^{p-1} x^i \right) + s(p-1)1 + t \left( \sum_{ij \neq 0} x^i \right) + u \left( \sum_{i=0,j=1}^{p-1} x^i y^j \right) \]
Since $\Aut_{L_3(p)}(V_0) \cong GL_2(\F_p)$, its action on $V_0 - \{ 1 \}$ is transitive
and so is its action on the set of non-trivial representations. Hence all the 
coefficients of these non-trivial representations must all be equal, that is, $ r+(p-1)t = u$.
By a symmetric argument with the subgroup $V_p$ generated by $b$ and $c$, we obtain
$ s+(p-1)t=u$, hence $ r=s$. Therefore
\[ \rho = q1 + r(X+Y+Z) + t(M+(p-1)Z) \]
By Proposition \ref{CharacterizationFuerte}, $\Rep_G(S)$ is factorial. In fact, it is 
the free monoid generated by $1$, $X+Y+Z$ and $M+(p-1)Z$.
\newline

\noindent \begin{bf}Case 4:\end{bf} $G = L_3(p):2$, where $3$ does not divide $p-1$.
Note that an $L_3(p)$--invariant representation $\rho$ of $S$ is $(L_3(p):2)$--invariant
if and only if $f^*\rho \cong \rho$, where $f$ is the automorphism of $S$ that permutes
$a$ and $b$. Since $f^*$ fixes $X$, $Y$, $M$ and $Z$, we obtain that $\rho$ is $L_3(p)$--invariant 
if and only if it is $(L_3(p):2)$--invariant. In particular, we see that $S$ has three 
$L_3(p)$--conjugacy classes.
\newline

\noindent \begin{bf}Case 5:\end{bf} $G=^2 F_4(2)'$ and $J_4$ at the prime $3$. The fusion 
systems of these groups contain the fusion system of $L_3(3):2$, hence they either have 
the same conjugacy classes as $L_3(3):2$ or they have only two conjugacy classes. Therefore 
the monoid $\Rep_G(S)$ is factorial for these two groups by Lemma \ref{transitive} or by
the previous case.
\newline

\noindent \begin{bf}Case 6:\end{bf} $G=Th$ at the prime $5$. In this case all the elementary 
abelian subgroups of rank two are radical and $Th$--conjugate and have $GL_2(\F_5)$ as their 
group of automorphisms. Given a nontrivial element $x$ in $S$, either $x = c^j$, in which case 
$x$ is conjugate to $c$ in $Th$, or the subgroup generated by $c$ and $x$ is one of the elementary 
abelian subgroups of rank two of $S$ and again $x$ is conjugate to $c$ in $Th$. Therefore the fusion system of $Th$ 
is transitive and so $\Rep_G(S)$ is factorial by Lemma \ref{transitive}.
\newline

\noindent \begin{bf}Case 7:\end{bf} $G=L_3(p)$ when $3$ divides $p-1$. In this case $\Out_{L_3(p)}(S) \cong \Z/(p-1) \times \Z/[(p-1)/3]$ 
and it is the subgroup of $\Out(S)$ generated by the matrices
\[ \left( \begin{array}{cc}
           \xi & 0 \\
           0 & \xi^{-2} \end{array} \right)  \qquad \qquad \left( \begin{array}{cc}
           \xi^{-2} & 0 \\
           0 & \xi \end{array} \right) \]
where $\xi$ is a generator of $\F_p^{\times}$. A more convenient set of generators is given by
\[ \left( \begin{array}{cc}
           \xi^3 & 0 \\
           0 & 1 \end{array} \right)  \qquad \qquad \left( \begin{array}{cc}
           \xi^{-2} & 0 \\
           0 & \xi \end{array} \right) \]
Let us compute a basis for $\Rep(S)^{\Out_{L_3(p)}(S)}$. The orbit of $x$ contains $x^{\xi^3}$, hence also $x^{\xi^3\xi^{-2}} = x^{\xi}$
and all the powers of $x$. The same happens for $y$. To find the orbit of an element
$x^i y^j$ with $ij \neq 0$, note that the action of $\Out_{L_3(p)}(S)$ on the set $\{ x^i y^j \mid ij \neq 0 \}$
is free, hence this set breaks into three orbits of size $(p-1)^2/3$. If $x^i y^j$ is related to $x^{i'} y^{j'}$,
then $ i' = i \xi^{3k-2n}$ and $j' = j \xi^n$, and so
\[ j'^{-1} i' = j^{-1} \xi^{-n} i \xi^{3k-2n} = j^{-1} i \xi^{3k-3n} \]
This condition breaks the set $\{ x^i y^j \mid ij \neq 0 \}$ into three subsets, those that satisfy $j^{-1} i = \xi^m$
with $m$ congruent to $0$, $1$ and $2$ mod $3$, respectively. Since each of these three subsets have size $(p-1)^2/3$,
these are the three orbits. Finally, since the determinant of the second matrix is $\xi^{-1}$, which is a generator of
$\F_p^{\times}$, the orbit of $\phi_1$ includes all the $\phi_j$. Hence a basis of $\Rep(S)^{\Out_{L_3(p)}(S)}$ is given by
\[ \left\{ 1, X, Y, Z, M_n = \sum_{j,k} x^{ja^{3k+n}} y^j \right\} \]
where $ 0 \leq n \leq 2$, the index $j$ runs over $\{1,\ldots,p-1\}$ and the index $k$ over $\{1,\ldots,(p-1)/3 \}$. Now let 
$\rho$ be an $L_3(p)$--invariant representation of $S$. Then
\[ \rho = q1+rX+sY+t_0M_0+t_1M_1+t_2M_2+uZ \]
The restriction of $\rho$ to the subgroup $V_0$ generated by $a$ and $c$ is
%
%
%
\[ q1+r \left( \sum_{i=1}^{p-1} x^i \right) + s(p-1)1 + \frac{(t_0+t_1+t_2)(p-1)}{3} \left( \sum_{i=1}^{p-1} x^i \right) + u \left( \sum_{i=0,j=1}^{p-1} x^i y^j \right) \]
Since $\Aut_{L_3(p)}(V_0) \cong SL_2(\F_p) \rtimes \Z/[(p-1)/3]$, the elements $x$ and $xy$
are related through the action of $\Aut_{L_3(p)}(V_0)$ and therefore the coefficients of 
$x$ and $xy$ must be equal. That is
\[ u = r + \frac{p-1}{3}(t_0+t_1+t_2) \]
By a similar argument with the subgroup $V_p$ generated by $b$ and $c$, we obtain
\[ u = s + \frac{p-1}{3}(t_0+t_1+t_2) \]
from where $r=s$. Therefore
\[ \rho = q1 + r(X+Y+Z) + t_0\left( M_0+ \frac{p-1}{3}Z \right) + t_1 \left( M_1+ \frac{p-1}{3}Z \right) + t_2\left( M_2+ \frac{p-1}{3}Z \right) \] 
By Proposition \ref{CharacterizationFuerte}, the monoid $\Rep_G(S)$ is factorial, it is in fact the 
free monoid generated by $1$, $X+Y+Z$ and $M_n+\frac{p-1}{3}Z$ for $n=0,1,2$.
\newline

\noindent \begin{bf}Case 8:\end{bf} $G=L_3(p):2$ when $3$ divides $p-1$. The
group $\Out_{L_3(p):2}(S)$ is generated by $\Out_{L_3(p)}$ and the class of 
the automorphism which permutes $a$ and $b$. Therefore a basis for 
$\Rep(S)^{\Out_{L_3(p):2}(S)}$. It is given by
\[ \{ 1, X + Y, M_0, M_1 + M_2, Z \} \] 
Considering the automorphisms of the subgroups $V_0$ and $V_p$ as above, we
obtain that $\Rep_G(S)$ is generated by $ 1, X+Y+Z, M_0 + \frac{p-1}{3}Z,
M_1 + M_2 + \frac{2(p-1)}{3}Z $, hence it is factorial
by Proposition \ref{CharacterizationFuerte}.
\newline

\noindent \begin{bf}Case 9:\end{bf} $G=L_3(p).3$ when $3$ divides $p-1$.
In this case, the group $\Out_{L_3(p).3}(S)$ is the subgroup
of diagonal matrices in $GL_2(\F_p)$ and therefore a basis for
$\Rep(S)^{\Out_{L_3(p).3}(S)}$ is given by $\{ 1, X, Y, M, Z \}$.
Considering the automorphisms of the subgroups $V_0$ and $V_p$, we obtain
that $\Rep_G(S)$ is generated by $ 1, X+Y+Z, M+6Z$, hence it is a factorial
monoid by Proposition \ref{CharacterizationFuerte}.
\newline

\noindent \begin{bf}Case 10:\end{bf} $G=L_3(p).\Sigma_3$ when $3$ divides $p-1$. Since the fusion system 
of $L_3(p).\Sigma_3$ contains the fusion system of $L_3(p).3$, it either has the same conjugacy 
classes as $L_3(p).3$ or two conjugacy classes. So $\Rep_G(S)$ is a factorial
monoid by the previous case or by Lemma \ref{transitive}.
\newline

\noindent \begin{bf}Case 11:\end{bf} $G=He$, the Held group, at the prime $7$. The group $\Out_{He}(S)$ 
is generated by the matrices
\[ \left( \begin{array}{cc}
 0 & 3 \\
 3 & 0 \end{array} \right), \left( \begin{array}{cc}
 2 & 0 \\
 0 & 1 \end{array} \right), \left( \begin{array}{cc}
 1 & 0 \\
 0 & 2 \end{array} \right) \]
Hence $\Rep(S)^{\Out_{He}(S)}$ is generated by the elements
\[ \{ 1,Z, A,B,C,D,E \} \]
where $A = x + x^2 + x^4 + y^3 + y^6 + y^5$ and $B = x^3 + x^6 + x^5 + y + y^2 + y^4$. The rest of representations
are given by
\begin{align*}
C & = xy + x^2 y + x^4 y + x y^2 + x^2 y^2 + x^4 y^2 + x y^4 + x^2 y^4 + x^4 y^4 + x^3 y^3 + x^6 y^3 + x^5 y^3  \\
  & + x^3 y^5 + x^6 y^5 + x^5 y^5 + x^3 y^6 + x^6 y^6 + x^5 y^6
\end{align*}
\[ D = x y^3 + x y^6 + x y^5 + x^2 y^3 + x^2 y^6 + x^2 y^5 + x^4 y^3 + x^4 y^6 + x^4 y^5 \]
\[ E = x^3 y + x^6 y + x^5 y + x^3 y^2 + x^6 y^2 + x^5 y^2 + x^3 y^4 + x^6 y^4 + x^5 y^4 \]
Now let $\rho$ be a $He$--invariant representation of $S$. Then
\[ \rho = q1+rA+sB+tC+uD+vE+wZ \]
The restriction of $\rho$ to the subgroup $V_1$ generated by $ab$ and $c$ is
\[ (q+3u+3v)1+(r+s+3t+u+v)\left( \sum_{i=1}^6 x^i \right)+ w \left( \sum_{i=0,j=1}^{p-1} x^i y^j \right) \]
Since $\Aut_{\Ff}(V_1)= SL_2(\F_7)$, the elements $x$ and $xy$ are related via
the dual action. Therefore $ w = r+s+3t+u+v$ and thus
\[ \rho = (q+3t+3u)1+r(A+Z)+s(B+Z)+t(C+3Z)+u(D+Z)+v(E+Z) \]
We obtain that $\Rep_G(S)$ is generated by $ 1, A+Z, B+Z, C+3Z, D+Z, E+Z$, hence it is factorial
by Proposition \ref{CharacterizationFuerte}.
\newline

\noindent \begin{bf}Case 12:\end{bf} $G=He:2$ at the prime $7$. A representation 
$\rho$ of $S$ is $(He:2)$--invariant if and only if it is $He$--invariant and $f^*\rho = \rho$, 
where $f$ is the order-two automorphism that permutes $a$ and $b$. Note that $f^*$ fixes $C$ and $Z$,
permutes $A$ and $B$, and permutes $D$ and $E$. Hence $\Rep_G(S)$ is generated by $1$, $A+B+2Z$, $C+3Z$ 
and $D+E+2Z$. By Proposition \ref{CharacterizationFuerte}, this is a factorial monoid.
\newline

\noindent \begin{bf}Case 13:\end{bf} $G=Fi'_{24}$, the derived Fischer group, at the prime $7$. The
fusion system of $Fi'_{24}$ contains the fusion system of $He:2$, but for the derived Fischer 
group, the subgroups $V_3$, $V_5$ and $V_6$ are radical with $SL_2(\F_7)$ as group of automorphisms.
Let $\rho = q1+r(A+B+2Z)+s(C+3Z)+t(D+E+2Z)$. The restriction of $\rho$ to the subgroup $V_6$ generated by
$ab^6$ and $c$ is
\[ (q+6s)1+r \left( 2 \sum_{i=1}^6 x^i + 2 \sum_{i=0,j=1}^{p-1} x^i y^j \right) + s \left( 2 \sum_{i=1}^6 x^i + 3 \sum_{i=0,j=1}^{p-1} x^i y^j \right)
+ t \left(3 \sum_{i=1}^6 x^i + 2 \sum_{i=0,j=1}^{p-1} x^i y^j \right) \]
Since $x$ and $xy$ are related through the dual action of $SL_2(\F_7)$, we must have
$ 2r+2s+3t = 2r+3s+2t$, that is, $t=s$ and therefore
\[ \rho = q1 + r(A+B+2Z) + s(C+D+E+5Z) \]
Hence $\Rep_G(S)$ is a factorial monoid by Proposition \ref{CharacterizationFuerte} and we see
that this fusion system has three conjugacy classes.
\newline

\noindent \begin{bf}Case 14:\end{bf} $G=Fi_{24}$, the Fisher group, at the prime $7$, and $\Ff = RV_1$.
The fusion system of the Fisher group $Fi_{24}$ and the exotic $7$-local
fusion system $RV_1$ both contain the fusion system of $Fi'_{24}$, so they 
either have the same conjugacy classes as $Fi'_{24}$ or two conjugacy
classes. So $\Rep_G(S)$ and $\Rep(\Ff)$ are factorial monoids by the
previous case or by Lemma \ref{transitive}.
\newline

\noindent \begin{bf}Case 15:\end{bf} $ G = O'N$, the O'Nan's group, at the prime $7$. In this case the 
group $\Out_{\Ff}(S)$ is generated by the matrices
\[ \left( \begin{array}{cc}
     3 & 0 \\
     0 & 3 \end{array} \right) \quad \left( \begin{array}{cc}
     1 & 0 \\
     0 & 6 \end{array} \right) \quad \left( \begin{array}{cc}
     0 & 1 \\
     1 & 0 \end{array} \right) \]
and therefore the monoid $\Rep(S)^{\Out_{\Ff}(S)}$ is generated by the representations
\[ \{ 1, X+Y, Z, L, N \}  \]
where 
\[ L = \sum_{i=1}^6 x^i y^i + \sum_{i=1}^6 x^i y^{7-i} \]
and $ N = M-L$. Let 
\[ \rho = m1+n(X+Y)+lL+rN+kZ \]
The restriction of $\rho$ to the subgroup $V_0$ generated by $a$ and $c$ is given by
\[ m1 + n1 + n \left( \sum_{i=1}^6 x^i \right) + 2l \left( \sum_{i=1}^6 x^i \right) + 4r \left( \sum_{i=1}^6 x^i \right) + k \left( \sum_{i=0, j=1}^6 x^i y^j \right) \]
Since $V_0$ is centric and radical, then $x$ and $xy$ are related through the action
of $SL_2(\F_7)$ and therefore $ n+2l+4r=k$. Similarly, the restriction to the subgroup
$V_1$ generated by $ab$ and $c$ is given by
\[ (m+6l)1 + 2n \left( \sum_{i=1}^6 x^i \right) + l \left( \sum_{i=1}^6 x^i \right) + 4r \left( \sum_{i=1}^6 x^i \right) + k \left( \sum_{i=0, j=1}^6 x^i y^j \right) \]
and so $ 2n+l+4r=k$. Then $n=l$ and $ k = 3n+4r$. Therefore $\Rep_G(S)$ is generated
by $ 1$, $X+Y+M+3Z$ and $N+4Z$ and so it is factorial by Proposition \ref{CharacterizationFuerte}. 
We also see that there are three conjugacy elements of elements of $S$. 
\newline

\noindent \begin{bf}Case 16:\end{bf} $G=O'N:2$ at the prime $7$, and $\Ff=RV_2, RV_3$. Since all these fusion
systems contain the fusion system of $O'N$, the corresponding monoids are factorial
by the previous case or Lemma \ref{transitive}.
\newline

\noindent \begin{bf}Case 17:\end{bf} $ G = M$, the Fischer-Griess Monster, at the prime $13$. Following \cite{Y},
the group $\Out_M(S) \cong 3 \times 4S_3$ is generated by the matrices
\[ r= \left( \begin{array}{cc}
              3 & 0 \\
              0 & 9 \end{array} \right) \quad s= \left( \begin{array}{cc}
              5 & -4 \\
              -2 & 7 \end{array} \right) \quad z= \left( \begin{array}{cc}
              2 & 2 \\
              1 & -2 \end{array} \right)  \]
The element $r$ has order three, while $z$ has order $24$ and satisfies
\[ z^2= \left( \begin{array}{cc}
              6 & 0 \\
              0 & 6 \end{array} \right) \]
Moreover, $s=zrz^{-1}$. Note that $6$ is a generator of $\F_{13}^{\times}$. We first compute the orbit of the 
irreducible representations of $S$ under the action of $\Out_G(S)$, which are
\[ \{ 1, Z, R, T \} \]
where
\begin{align*} 
R & = \sum_{n=1}^{12} (z^2)^n x + \sum_{n=1}^{12} (z^2)^n xy + \sum_{n=1}^{12} (z^2)^n xy^3 + \sum_{n=1}^{12} (z^2)^n xy^9 + \sum_{n=1}^{12} (z^2)^n xy^7
+ \sum_{n=1}^{12} (z^2)^n xy^8 \\
 & + \sum_{n=1}^{12} (z^2)^n xy^{11} + \sum_{n=1}^{12} (z^2)^n y 
\end{align*}
\[ T = \sum_{n=1}^{12} (z^2)^n xy^2 + \sum_{n=1}^{12} (z^2)^n xy^{10} + \sum_{n=1}^{12} (z^2)^n xy^6 + \sum_{n=1}^{12} (z^2)^n xy^4
+ \sum_{n=1}^{12} (z^2)^n xy^5 + \sum_{n=1}^{12} (z^2)^n xy^{12} \]
We obtained these orbits $R$ and $T$ as follows. It is clear that the action of $z^2$ is free on elements of the form $x^i y^j$.
On the other hand, the actions of $z$ and $r$ allows us to obtain from $x$ at least the following elements.
\[
\diagram
x \rto^z & x^2y^2 \dto^r        &                  &       \\
         & x^6y^5 \dto^r \rto^z & x^4y^2           &       \\
         & x^5y^6 \rto^z        & x^3y^{11} \dto^r &        \\
         &                      & x^9y^8 \rto^z    & y^{10} 
\enddiagram
\]
It is easy to see that these elements lie in the $z^2$-orbits of $xy$, $xy^3$, $xy^9$, $xy^7$, $xy^8$, $xy^{11}$ and $y$. 
Therefore the orbit of $x$ has at least $96$ elements. On the other hand, $1$, $rz^4 = \left( \begin{array}{cc}
                                                                                     1 & 0 \\
                                                                                     0 & 3 \end{array} \right) $ and $(rz^4)^2$ fix $x$,
hence the orbit has at most $96$ elements. This shows that $R$ is the orbit of $x$. To find the orbit of $xy^2$, note that
we can obtain the following elements using the actions of $z$ and $r$.
\[ 
\diagram
xy^2 \rto^z \dto^r & x^7y^5 \dto^r \\
x^3y^5 \dto^r            & x^8y^6 \dto^r \\            
x^9y^6 & x^{11}y^2
\enddiagram
\]
Again, it is easy to see that these elements lie in the $z^2$-orbits of $xy^{10}$, $xy^6$, $xy^4$, $xy^5$ and $xy^{12}$.
Therefore the orbit of $xy^2$ has at least $72$ elements. But there are only $168$ elements of the form $x^iy^j$ with $(i,j)\neq(0,0)$,
hence $T$ is the orbit of $xy^2$.

Now let $\rho= a1+nZ+mR+kT$ be a $G$--invariant representation of $S$. The restriction of $\rho$ to the subgroup $V_1$ generated by $ab$ and $c$ is
\[ (a+12k)1 + n \sum_{i=0,j=1}^12 x^i y^j + 8m \sum_{i=1}^{12} x^i + 5k \sum_{i=1}^{12} x^i \]
Since $V_1$ is radical and its group of automorphisms contains $SL_2(\F_{13})$, the coefficients of $xy$ and $x$ must be equal,
that is
\[ n = 6m+5k \]
Hence the irreducible $G$--invariant representations are $1$, $8Z+M$ and $5Z+N$. Therefore $\Rep_G(S)$
is a factorial monoid by Proposition \ref{CharacterizationFuerte}, or alternatively, by using the atlas 
of finite groups \cite{CCNPW} to check that there are two conjugacy classes of elements of order $13$ in $S$
and Corollary \ref{CharacterizationNumber}.

\begin{theorem}
\label{smallorder}
If $|S| \leq p^3$ and $\Ff$ is a saturated fusion system over $S$, then $\Rep(\Ff)$ is a factorial monoid.
\end{theorem}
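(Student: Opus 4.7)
The plan is to reduce the theorem to a case analysis over the classification of saturated fusion systems on $p$-groups of order at most $p^3$. The first reduction is easy: every $p$-group of order at most $p^2$ is abelian, and for abelian $S$ every morphism in $\Ff$ is the restriction of an automorphism of $S$, so the essential rank is zero and factoriality follows from Lemma \ref{EssentialRank}. Hence I only need to handle non-abelian $S$ of order $p^3$. When $|S|=p^3$ is abelian (e.g.\ $\Z/p^3$, $\Z/p^2\times\Z/p$, $(\Z/p)^3$), the same lemma applies.

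For $p=2$ and $|S|=8$, the non-abelian options are $D_8$ and $Q_8$. There are finitely many saturated fusion systems on each (three over $D_8$: namely $D_8$, $\Sigma_4$ and $A_6$; two over $Q_8$: namely $Q_8$ and $SL_2(\F_3)$). I would dispatch each by hand: the ``group'' fusion systems are trivially factorial since $\Rep(G)$ is factorial; $\Sigma_4$ was already handled in Example \ref{ExampleSigma4}; and for $A_6$ and $SL_2(\F_3)$ I would count the $\Ff$--conjugacy classes of elements of $S$ (three in each case) and exhibit three irreducible $\Ff$--invariant representations, so factoriality follows from Corollary \ref{CharacterizationNumber}.

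For $p$ odd and $|S|=p^3$ non-abelian, $S$ is extraspecial, and the saturated fusion systems on the two extraspecial groups have been classified in \cite{RV}; a fusion system is determined by $\Out_{\Ff}(S)$ together with the data of which rank-two elementary abelian subgroups $V_i$ are $\Ff$--centric radical and what their automorphism groups are. The uniform strategy is: first compute a basis of the monoid of $\Out_{\Ff}(S)$--fixed representations in $\Rep(S)$ by grouping the irreducible representations of $S$ (the $p^2$ linear characters $x^iy^j$ and the $p-1$ faithful $p$-dimensional $\phi_j$) into $\Out_{\Ff}(S)$--orbits; then impose, for each $\Ff$--centric radical $V_i$, the additional linear condition on coefficients that comes from requiring $x$ and $xy$ to have equal multiplicity in the restriction (since $\Aut_{\Ff}(V_i)$ contains $SL_2(\F_p)$ and acts transitively on nonzero linear characters of $V_i$); finally, use Proposition \ref{CharacterizationFuerte} to verify that the resulting list of $\Ff$--invariant representations forms a basis with each member containing an irreducible of $S$ not contained in any other, or equivalently use Corollary \ref{CharacterizationNumber} after checking the number of atoms matches the number of $\Ff$--conjugacy classes.

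The main obstacle is sheer bookkeeping: working through all the entries of Tables 1.1 and 1.2 of \cite{RV}, including the fusion systems coming from sporadic and Lie-type groups ($L_3(p)$ and its extensions, $He$, ${}^2F_4(2)'$, $J_4$, $Th$, $Fi'_{24}$, $O'N$, the Monster, and the exotic fusion systems $RV_i$) where the orbits under $\Out_{\Ff}(S)$ can be intricate to pin down. A useful simplification throughout is that when $\Ff_1 \subseteq \Ff_2$ are two saturated fusion systems on $S$, the $\Ff_2$--conjugacy classes coarsen those of $\Ff_1$; so if $\Ff_1$ has $k$ classes then $\Ff_2$ has between $2$ and $k$ classes. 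Combined with Lemma \ref{transitive} (transitive systems are automatically factorial), this lets me dispose of many extensions (like $L_3(p):2$, $L_3(p).\Sigma_3$, $J_4$, $Th$, $Fi_{24}$, $RV_1$, $O'N:2$, $RV_2$, $RV_3$) immediately from a smaller case without any extra computation.
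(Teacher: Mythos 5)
Your outline coincides with the paper's proof for $|S|\leq p^2$, for $p=2$, and for the extraspecial group $p_+^{1+2}$ of exponent $p$ (same reduction: compute $\Rep(S)^{\Out_{\Ff}(S)}$ by orbits of the $x^iy^j$ and $\phi_j$, impose the conditions coming from the centric radical rank-two elementary abelian subgroups, then conclude with Proposition \ref{CharacterizationFuerte} or Corollary \ref{CharacterizationNumber}). However, there is a genuine gap in your treatment of odd $p$: there are \emph{two} non-abelian groups of order $p^3$, and \cite{RV} classifies saturated fusion systems only over the extraspecial group of exponent $p$; it contains no classification over the extraspecial group of exponent $p^2$, so the premise ``the saturated fusion systems on the two extraspecial groups have been classified in \cite{RV}'' is false. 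Moreover your uniform strategy is tailored to $p_+^{1+2}$ (the family $V_0,\ldots,V_p$ of rank-two elementary abelian subgroups), whereas the exponent-$p^2$ group has a single such subgroup and no table to run through. The paper closes this case differently: the exponent-$p^2$ group is metacyclic, hence by Theorem 4.2 of \cite{St} every saturated fusion system over it has essential rank zero, and Lemma \ref{EssentialRank} applies. Without this input, or an equivalent argument that no proper subgroup of that group can be essential, your case analysis does not cover all non-abelian $S$ of order $p^3$.

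A secondary caveat: your containment shortcut ($\Ff_1\subseteq\Ff_2$ implies $\Ff_2$ has between $2$ and $k$ classes) settles $\Ff_2$ outright only when $\Ff_1$ has exactly three classes, since then either $\Ff_2$ is transitive (Lemma \ref{transitive}) or it has the same classes as $\Ff_1$, hence the same invariant characters and the same monoid. This disposes of $J_4$, ${}^2F_4(2)'$, $Fi_{24}$, $RV_1$, $O'N:2$, $RV_2$, $RV_3$, $L_3(p).\Sigma_3$ and $L_3(p):2$ when $3\nmid p-1$, but not $L_3(p):2$ when $3\mid p-1$ (there $L_3(p)$ has five classes), nor $He:2$ or $Fi'_{24}$; these still need the direct computations the paper carries out in its Cases 8, 12 and 13. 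Likewise $Th$ is transitive, but seeing this requires a short argument about conjugating every nontrivial element into the center, not a containment observation.
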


\begin{proof}
If $|S| \leq p^2$, then $S$ is abelian and $\Ff$ has essential rank zero, 
hence $\Rep(\Ff)$ is factorial by Lemma \ref{EssentialRank}.

Thus we may assume that the order of $S$ is exactly $p^3$.
If $p=2$, then $S$ is abelian or isomorphic to $D_8$ or $Q_8$ and
we checked that $\Rep(\Ff)$ is factorial in those cases. If $p>2$,
then $S$ is either abelian or generalized extraspecial. If $S$ is 
isomorphic to $p^{1+2}_+$, we showed before the proposition that 
for any saturated fusion system $\Ff$ over $S$, the monoid $\Rep(\Ff)$ 
is factorial. In $S$ is not isomorphic to $p^{1+2}_+$, then the essential
rank of $\Ff$ is zero by Theorem 4.2 in \cite{St}, hence $\Rep(\Ff)$ is
factorial by Lemma \ref{EssentialRank}.
\end{proof}

Note that this result can not be extended to order $p^4$, as the non-factorial
examples in \cite{G} and \cite{Re} are given over Sylows of order $3^4$ and $2^4$,
respectively.

\section{The half-factoriality property}
\label{SectionHalf}

In this section we give an example of a monoid of fusion-invariant representations
for which uniqueness of factorization fails in such a way that not even the length
of an element is well defined. 

Let $M$ be an atomic monoid. Given a factorization of an element 
\[ x = \sum_{i=0}^k a_i \]
as a sum of atoms, we say that $k$ is the length of the factorization. Following the 
terminology in \cite{GH}, but keeping in mind that all our monoids are reduced, we
consider half-factorial monoids.

\begin{definition}
Let $M$ be an atomic monoid. We say that $M$ is half-factorial if for each element
$x \in M$, every factorization of $x$ has the same length. 
\end{definition}

Note that different elements may have factorizations of different lengths. The following 
proposition is inspired by Proposition 5.4 in \cite{Got}. Given an atomic monoid $M$, consider
the subset $\Conv(M)$ of elements of $K(M)$ which can be expressed in the form
\[ \sum n_i a_i \]
where the $a_i$ are atoms and $\sum n_i \neq 0$.

\begin{lemma}
\label{CharacterizationHalfFactorial}
Let $M$ be an atomic monoid. The following conditions are equivalent.
\begin{enumerate}
\item $M$ is half-factorial.
\item $ 0 \notin \Conv(M)$.
\item $\Conv(M) \neq K(M)$.
\end{enumerate}
\end{lemma}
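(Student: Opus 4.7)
The plan is to establish the two equivalences (1) $\Leftrightarrow$ (2) and (2) $\Leftrightarrow$ (3), with condition (2) as the pivot.

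The pair (2) $\Leftrightarrow$ (3) should be essentially formal. One direction is immediate: since $0 \in K(M)$, the equality $\Conv(M) = K(M)$ would force $0 \in \Conv(M)$. For (3) $\Rightarrow$ (2), I would assume a witnessing relation $0 = \sum_i n_i a_i$ in $K(M)$ with $\sum_i n_i \neq 0$. Any $y \in K(M)$ can be expressed as an integer combination of atoms by atomicity of $M$, and if this coefficient sum happens to vanish, adding the trivial relation $0 = \sum_i n_i a_i$ shifts the total coefficient by $\sum_i n_i \neq 0$, placing $y$ in $\Conv(M)$. Hence $\Conv(M) = K(M)$, which contradicts (3).

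For (2) $\Rightarrow$ (1), I would argue by contrapositive. If some $w \in M$ admits two factorizations $a_1 + \cdots + a_p$ and $b_1 + \cdots + b_q$ with $p \neq q$, then in $K(M)$ the identity $0 = \sum_i a_i - \sum_j b_j$ is a combination of atoms with total coefficient $p - q \neq 0$, so $0 \in \Conv(M)$.

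The direction (1) $\Rightarrow$ (2) is where I expect the only real subtlety. Suppose $0 = \sum_i n_i a_i$ in $K(M)$ with $\sum_i n_i \neq 0$. Separating positive and negative indices, set $x = \sum_{n_i > 0} n_i a_i$ and $y = \sum_{n_i < 0} (-n_i) a_i$ in $M$, so that $x$ and $y$ have equal images in $K(M)$ but factorizations whose lengths differ by $\sum_i n_i \neq 0$. The point to be careful about is that equality in $K(M)$ does not by itself force $x = y$ in $M$; the standard construction of the Grothendieck group only provides some $z \in M$ with $x + z = y + z$ in $M$. To finish, I would append any atom factorization of $z$ to the given factorizations of $x$ and $y$, producing two factorizations of the common element $x + z = y + z$ whose lengths still differ by $\sum_i n_i \neq 0$, contradicting half-factoriality. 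This cancellativity step is the only genuine obstacle; the remainder of the argument is bookkeeping.
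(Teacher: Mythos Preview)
Your proof is correct and follows essentially the same line as the paper, which argues via the cycle $(2)\Rightarrow(1)$, $(3)\Rightarrow(2)$, $(1)\Rightarrow(3)$ using the same separation of $0=\sum n_i a_i$ into its positive and negative parts. You are in fact more careful than the paper on one point: when passing from the relation $0=\sum_i n_i a_i$ in $K(M)$ to an equality in $M$, the paper simply writes $\sum_{n_i>0} n_i a_i = \sum_{n_i<0}(-n_i)a_i$ as though this already holds in $M$, implicitly using cancellativity, whereas you correctly invoke the existence of $z\in M$ with $x+z=y+z$ and append a factorization of $z$ to both sides. In the paper's intended applications the monoid is a submonoid of the free monoid $\Rep(S)$ and hence cancellative, so the shortcut is harmless there, but your version is what is needed for the lemma as stated for arbitrary atomic monoids.
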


\begin{proof}
Assume that $M$ is not half-factorial. Then there exists $x \in M$ that
can be expressed as a sum of atoms in two different ways
\[ x = \sum n_i a_i = \sum m_i b_i \]
with $\sum n_i \neq \sum m_i$. Then in $K(M)$ we have
\[ 0 = \sum n_i a_i - \sum m_i b_i \]
and so $0 \in \Conv(M)$.

Assume now that $ 0 \in \Conv(M)$ and let $x \in K(M)$. We must have $x = \sum n_i a_i$
where $a_i$ are atoms. If $ \sum n_i \neq 0$, then $x \in \Conv(M)$. If $\sum n_i = 0$,
then since $ 0 \in \Conv(M)$ we have $ 0 = \sum m_i b_i$ with $\sum m_i \neq 0$ and therefore
\[ x = x + 0 = \sum n_i a_i + \sum m_i b_i \]
from where $x \in \Conv(M)$.

Finally, assume $\Conv(M) = K(M)$. Since $ 0 \in \Conv(M)$, we have $ 0 = \sum n_i a_i$ with $\sum n_i \neq 0$.
Then
\[ \sum_{n_i > 0} n_i a_i = \sum_{n_i<0} (-n_i)a_i \]
and note that $\sum_{n_i>0} n_i$ can not equal $\sum_{n_i<0} (-n_i)$ since $\sum n_i \neq 0$. Hence $M$
is not half-factorial. 
\end{proof}

\begin{example}
We saw in Example \ref{EjemploBueno} that $\Rep_{\Sigma_9}(S)$ is not factorial when
$S$ is a $3$-Sylow subgroup of $\Sigma_9$. But we shall see that it is half-factorial. 
In Example \ref{AtomsOfSigmaNueve} we proved that the atoms of this monoid are given by 
$1$, $P$, $Q'$, $R$, $S$ and $P-S+Q'$. Let
\[ 0 = a1+bP+cQ'+dR+eS+f(P-S+Q') = a1+(b+f)P+(c+f)Q'+dR+(e-f)S \]
Since $R_{\Sigma_9}(S)$ is free abelian with basis $\{1,P,Q',R,S\}$, we
obtain $a=b+f=c+f=d=e-f=0$ and therefore 
\[ a+b+c+d+e+f = a+(b+f)+(c+f)+d+(e-f)=0 \]
Hence $ 0 \notin \Conv(\Rep_{\Sigma_9}(S))$ and so $\Rep_{\Sigma_9}(S)$ is
half-factorial by Proposition \ref{CharacterizationHalfFactorial}.
\end{example}

The following proposition is inspired by this example.

\begin{proposition}
\label{ConvexProposition}
Let $M$ be an atomic monoid such that $K(M)$ is free abelian of finite rank. 
If there exists a basis of $K(M)$ such that every atom of $M$ can be expressed 
as a convex integral linear combination of the elements of this basis, then $M$ is
half-factorial.
\end{proposition}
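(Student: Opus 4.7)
My plan is to deduce this directly from the characterization of half-factoriality in Lemma \ref{CharacterizationHalfFactorial}, namely that it suffices to verify $0 \notin \Conv(M)$. The hypothesis gives us a basis $\{e_1,\ldots,e_k\}$ of $K(M)$ such that every atom $a$ of $M$ satisfies $a = \sum_{i=1}^k n_i(a) e_i$ with $\sum_i n_i(a) = 1$ (interpreting ``convex integral linear combination'' in the sense illustrated by the preceding example, where the coefficients of each atom in the chosen basis sum to $1$).

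The key construction is the augmentation homomorphism $\varphi \colon K(M) \to \Z$ defined on the basis by $\varphi(e_i) = 1$ for all $i$; since $K(M)$ is free abelian on $\{e_1,\ldots,e_k\}$, this extends uniquely to a group homomorphism. By the hypothesis, $\varphi(a) = 1$ for every atom $a$ of $M$.

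Now I would take an arbitrary element $x \in \Conv(M)$ and write $x = \sum_j n_j a_j$ with $a_j$ atoms and $\sum_j n_j \neq 0$. Applying $\varphi$ yields
\[ \varphi(x) = \sum_j n_j \varphi(a_j) = \sum_j n_j \neq 0, \]
so $x \neq 0$. This gives $0 \notin \Conv(M)$, and Lemma \ref{CharacterizationHalfFactorial} then yields that $M$ is half-factorial.

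There is no real obstacle here once one notices that the coefficient-sum map is a well-defined group homomorphism on $K(M)$; the whole argument is a one-line consequence of the augmentation functional separating the atoms from the origin in an arithmetically controlled way.
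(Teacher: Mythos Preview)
Your proof is correct and follows essentially the same approach as the paper: both arguments verify $0 \notin \Conv(M)$ by exploiting that the sum of the basis coefficients of each atom equals $1$, and then invoke Lemma \ref{CharacterizationHalfFactorial}. The only difference is cosmetic---you package the coefficient-sum as an augmentation homomorphism $\varphi$, whereas the paper carries out the same computation by expanding $0 = \sum_a n_a a$ in the basis and swapping the order of summation.
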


\begin{proof}
Let $\{b_1,\ldots,b_k\}$ be a basis of $K(M)$ fulfilling the condition
in the statement. For each atom $a$ of $M$, there exists a convex integral
linear combination
\[ a = \sum \lambda_j(a) b_j \]
Assume we have $0 = \sum n_a a$, where the sum runs over the set of atoms of $M$.
\[ 0 = \sum_a n_a a = \sum_a n_a \left( \sum_j \lambda_j(a) b_j \right) = \sum_j \left( \sum_a n_a \lambda_j(a) \right) b_j  \]
from where $\sum_a n_a \lambda_j(a) = 0 $ for each $j$. But then
\[ \sum_a n_a = \sum_a \left( \sum_j \lambda_j(a) \right) n_a = \sum_j \left( \sum_a n_a \lambda_j(a) \right) = 0 \]
This shows that $0 \notin \Conv(M)$, hence $M$ is half-factorial.
\end{proof}

Let us show that $\Rep_{\Sigma_6}(S)$ is not half-factorial when $S$ is a $2$-Sylow subgroup of $\Sigma_6$. We choose $S$ to be
the image of the composition $ D_8 \times \Z/2 \to \Sigma_4 \times \Z/2 \to \Sigma_6$, where the first map is induced by the
action of $\Sigma_4$ on the vertices of a square and the second map is induced by the standard inclusion of $\Sigma_4$ in $\Sigma_6$
and the element $(5,6)$. If $\beta$ is the sign representation of $\Z/2$, the irreducible representations of $S$ are of the
form $ \rho \otimes 1$ or $\rho \otimes \beta$, where $\rho \in \{ 1, X, Y , XY, Z \}$ is an irreducible representation of $D_8$.
We name the latter $\rho'_i$ according to the order in which they appeared in the previous line. We define for $ 1 \leq i \leq 10$
the representations 
\[ \rho_i = \left\{ \begin{array}{ll}
            \rho'_i \otimes 1 & \text{ if } i \leq 5 \\
            \rho'_{i-5} \otimes \beta & \text{ if } i > 5 \end{array} \right. \]
The group $ S $ has ten conjugacy classes and six $\Sigma_6$--conjugacy classes. Using the character criterion for a representation to be $\Sigma_6$--invariant, 
it is easy to show that 
\[ R_{\Sigma_6}(S) = \Z \{ \rho_1, \rho_2+\rho_4+\rho_5+\rho_7+\rho_{10},\rho_3+\rho_5+\rho_7+\rho_{10},-\rho_4+\rho_6-\rho_7-\rho_{10},\rho_4+\rho_8+\rho_{10}, \rho_9 \} \]
But a more convenient basis is given by
\[ R_{\Sigma_6}(S) = \Z \{ \rho_1, \rho_2+\rho_4+\rho_5+\rho_7+\rho_{10},\rho_3+\rho_5+\rho_7+\rho_{10},\rho_2+\rho_5+\rho_6,\rho_4+\rho_8+\rho_{10}, \rho_9 \} \]
Let us call these representations $\alpha_j$ for $j=1,\ldots,6$. Note that $\alpha_j$ with $j \neq 2$
are irreducible $\Sigma_6$--invariant representations by Remark \ref{Criterio}. Let $\alpha$ be an 
irreducible $\Sigma_6$--invariant representation of $S$. Then 
\[ \alpha = \sum n_i \alpha_i \]
If all the $n_i$ are nonnegative, then $\alpha$ must be one of the $\alpha_j$. On the other hand, the only coefficient that could be negative is $n_2 $ 
since each of the remaining $\alpha_i$ contains an irreducible representation of $S$ which is not shared with any other $\alpha_k$. Assume then $n_2 < 0$ 
and $n_i \geq 0$ for all $i \neq 2$. Then
\[ \alpha = n_1 \alpha_1 + (n_2+n_3) \alpha_3 + (n_2+n_4) \alpha_4 + (n_2+n_5) \alpha_5 + (-n_2)(\alpha_3+\alpha_4+\alpha_5-\alpha_2)+n_6 \alpha_6 \]
Note that 
\[ \alpha_3+\alpha_4+\alpha_5-\alpha_2 = \rho_3 + \rho_5 + \rho_6 + \rho_8 + \rho_{10} \]
Let $\alpha_7 = \rho_3 + \rho_5 + \rho_6 + \rho_8 + \rho_{10} $, which is $\Sigma_6$--invariant by the previous equation. Since $\alpha$ is
irreducible, only one coefficient must be one and the rest must be zero. Apart from the previous possibilities, we also obtain $\alpha=\alpha_7$.
Consider now the basis
\[ \{ \alpha_1, \alpha_2, \alpha_3, \alpha_5, \alpha_6, \alpha_7 \} \]
of $R_{\Sigma_6}(S)$. Note that $\alpha_2$ and $\alpha_7$ are the only elements of this basis which contain $\rho_2$ and $\rho_6$, respectively, 
hence they are irreducible by Remark \ref{Criterio}. Therefore $\Rep_{\Sigma_6}(S)$ is not factorial by Corollary \ref{CharacterizationNumber}. Moreover
\[ \alpha_3 + \alpha_4 + \alpha_5 = \alpha_2 + \alpha_7 \]
thus $\Rep_{\Sigma_6}(S)$ is not half-factorial. 

\appendix

\section{GAP algorithms for fusion-invariant representations}

We include here the algorithms related to fusion-invariant 
representations that we implemented in GAP for fusion systems 
of the form $\Ff_S(G)$, where $S$ is a $p$-Sylow subgroup of 
the finite group $G$.

\begin{itemize}
\item[(1)] The pattern of $G$--fusion of conjugacy classes of $S$.
\end{itemize} 

We take the list of $S$--conjugacy classes of elements of $S$ and check which of them
fuse to form a single $G$--conjugacy class of elements of $S$. The output is a list of 
positive integers with the same size as the list of $S$--conjugacy classes of $S$ such 
that the integers in two positions coincide if and only if the $S$--conjugacy classes
in those positions are $G$--conjugate. If $G$ is a symmetric group, we check $G$--conjugacy
using the cycle structure of elements.

It is easy to obtain from this process the number of $G$--conjugacy classes of elements
of $S$ and we add it to the output as an additional element in the list. 

\begin{itemize}
\item[(2)] Whether a representation of $S$ is $G$--invariant
\end{itemize}

Using the pattern of $G$--fusion of conjugacy classes of $S$, we check
for each two classes that were fused whether the characters coincide on
these classes. If at some point this is different from zero, we stop and 
return false. Otherwise it returns true. We have two versions of this
algorithm depending on whether the representation is given in terms of
the irreducible representations of $S$ or in terms of the $N_G(S)$--invariant
representations of $S$.

\begin{itemize}
\item[(3)] Whether $\Rep_G(S)$ is factorial, assuming Conjecture \ref{Regular} holds.
\end{itemize}

We compare the number of $G$--conjugacy classes of elements of $S$, which we can obtain from
the pattern of $G$--fusion of conjugacy classes of $S$, with the number of
irreducible $G$--invariant representations of $S$ which are subrepresentations
of the regular representations.

In the first version, we run over the list of subrepresentations of the regular representation of $S$. 
We build a list of irreducible $G$--invariant representations as follows. If the
representation is a subrepresentation of one in the list or a subrepresentation
of the complement of a representation in the list in the regular representation, we ignore it. Otherwise, we add
it to the list if it is $G$--invariant, or ignore it otherwise. If at some point
this list contains more elements that $G$--conjugacy classes of elements of $S$,
the function throws false, meaning that $\Rep_G(S)$ is not factorial. Otherwise,
it returns true, which would mean that $\Rep_G(S)$ is factorial if Conjecture
\ref{Regular} holds.

In the second version, we run over the list of those subrepresentations which
are $N_G(S)$--invariant, which must be determined previously. In the third version,
only for symmetric groups, we use the same list as in the second version, but using
the pattern of $G$--fusion of conjugacy classes that uses the cycle structure of
elements.

\begin{itemize}
\item[(4)] The list of irreducible $G$--invariant representations of $S$ which
are subrepresentations of the regular representation.
\end{itemize}

It follows the same algorithm as the previous function, but it does not stop until
it checks all the subrepresentations of the regular representation of $S$ in the
first version, and all the subrepresentations which are $N_G(S)$--invariant in the
second and third versions. It returns the list of irreducible $G$--invariant representations
of $S$ in terms of the irreducible representations of $S$.

\begin{enumerate}
\item[(5)] A basis for the complexification of the representation ring of $G$--invariant representations of $S$.
\end{enumerate}

Using the pattern of $G$--fusion of conjugacy classes of elements of $S$ (or the pattern
of cycle structure of elements if $G$ is a symmetric group), we find
pairs of elements of $S$ which determine all $G$--conjugacy relations between all
the elements of $S$. Then we create a matrix that has a row for each of these pairs.
The $j$th element of the row that corresponds to $(s_1,s_2)$ contains the element 
$\chi_j(s_1)-\chi_j(s_2)$, where $\chi_j$ is the character of the $j$th
irreducible representation of $S$ in the order established by GAP.

Finally we compute the integral nullspace of this matrix. We are regarding an element
$(a_1,\ldots,a_r)$ in the nullspace as the virtual character $a_1\chi_1 + \ldots + a_r \chi_r$.

\begin{remark}
Our algorithms to find whether $\Rep_G(S)$ and the list of irreducible $G$--invariant representations of $S$
may require a lot of computing time.. An alternative method to answer these two questions is to find the
complexification of the representation ring as a vector space in GAP using the algorithm described above, then
find the representation ring as an abelian group from this object, and finally follow the same process we used 
in Section \ref{SectionHalf} to find the irreducible $\Sigma_6$--invariant representations of a $2$-Sylow of $\Sigma_6$.
We illustrate this in the following example.
\end{remark}

\begin{example}
Let $S$ be a $3$-Sylow subgroup of $A_9$. Using GAP we find that a basis for $R_{A_9}(S) \otimes \C$ is
\begin{align*}
X  = \{ & \rho_1, -\rho_2-\rho_3-\rho_4+\rho_6-\rho_7+\rho_8,-\rho_2-\rho_3-\rho_4+\rho_5-\rho_7+\rho_9,\rho_2+\rho_3+\rho_{12}+\rho_{15}, \\
        & -\rho_2-\rho_3+\rho_{10}+\rho_{11}+\rho_{14}+\rho_{16},\rho_2+\rho_3+\rho_4+\rho_7+\rho_{13}+\rho_{17} \} 
\end{align*}
where $\rho_i$ are the irreducible representations in the order presented by GAP. We see that there are six $A_9$--conjugacy
classes of elements of $S$. It is easy to check that $R_{A_9}(S)=\Z X$. We can also find a basis of representations, namely
\begin{align*}
\{ & \rho_1, \rho_6+\rho_8+\rho_{13}+\rho_{17},\rho_5+\rho_9+\rho_{13}+\rho_{17},\rho_2+\rho_3+\rho_{12}+\rho_{15}, \\
   & \rho_4+\rho_7+\rho_{10}+\rho_{11}+\rho_{13}+\rho_{14}+\rho_{16}+\rho_{17}, \rho_2+\rho_3+\rho_4+\rho_7+\rho_{13}+\rho_{17} \}
\end{align*}
Let $\alpha_j$ be the $j$th element of this basis. By Remark \ref{Criterio}, the $A_9$--invariant
representations $\alpha_j$ with $ 1 \leq j \leq 5$ are irreducible. Let $\alpha$ be an irreducible
$A_9$--invariant representation of $S$ expressed as $\alpha = \sum n_i \alpha_i$ in $R_{A_9}(S)$.
The only $n_i$ that could be negative is $n_6$ and we can rewrite it as:
\[ \alpha = n_1\alpha_1 + n_2\alpha_2+n_3\alpha_3+(n_4+n_6)\alpha_4+(n_5+n_6)\alpha_5 + (-n_6)(-\alpha_6+\alpha_5+\alpha_4) \]
Let $\alpha_7 = -\alpha_6+\alpha_5+\alpha_4$. Note that
\[ \alpha_7 = \rho_{10}+\rho_{11}+\rho_{12}+\rho_{14}+\rho_{15}+\rho_{16} \]
Hence $\alpha_7$ is a new possibility for $\alpha$. Indeed $\alpha_7$ and $\alpha_6$ are both irreducible by Remark \ref{Criterio} since
$\{ \alpha_1,\alpha_2,\alpha_3,\alpha_5,\alpha_6,\alpha_7\}$ is also a basis of $R_{A_9}(S)$. We have found that there are seven irreducible
$A_9$--invariant representations of $S$, hence $\Rep_{A_9}(S)$ is not factorial by Corollary \ref{CharacterizationNumber}.
\end{example}

Using the algorithms above, we found new examples of non-factorial monoids of the form 
$\Rep_G(S)$, which we present in the following table together with the examples from \cite{G}
and \cite{Re}. We applied them to groups with Sylows of order $2^4$, $2^5$
and $3^4$ corresponding to the fusion systems in Propositions 4.3 and 4.4 of \cite{AOV}, to some of the
fusion systems in Tables 1 and 2 of Chapter 7 of \cite{MM}, and other classical groups. The column 
``Irr. $G$--invariant reps.'' refers to irreducible $G$--invariant representations of $S$ which are 
subrepresentations of the regular representation. The group $(\Z/3 \times \Z/9) \rtimes \Z/3$ in the
table is the group from the SmallGroups library of GAP with id $[81,9]$, which is the semidirect product 
coming from the action of $\Z/3$ determined by the matrix $\left( \begin{array}{cc} 1 & 0 \\
0 & 1 \end{array} \right)$.

\begin{center}
  \begin{tabular}{ | l | l | l | l | l | }
    \hline
    Group $G$ & Prime $p$ & $p$-Sylow $S$ & $G$--conj. classes of $S$ & Irr. $G$--invariant reps. \\ \hline
    $\Sigma_6$ & $2$ & $\Z/2 \times D_8$ & $6$ & $7$ \\ \hline
    $PSL_2(\F_{17})$ & $2$ & $D_{16}$ & $5$ & $7$ \\ \hline
    $PSU_3(\F_5)$ & $2$ & $SD_{16}$ & $5$ & $8$ \\ \hline
    $M_{10}$ & $2$ & $SD_{16}$ & $6$ & $8$ \\ \hline
    $SL_3(\F_3)$ & $2$ & $SD_{16}$ & $5$ & $8$ \\ \hline
    $PSL_3(\F_5)$ & $2$ & $\Z/4 \wr \Z/2$ & $7$ & $8$ \\ \hline 
    $PSL_2(\F_{31})$ & $2$ & $D_{32}$ & $9$ & $21$ \\ \hline
    $PSU_3(\F_9)$ & $2$ & $SD_{32}$ & $9$ & $53$ \\ \hline
    $GL_3(\F_3)$ & $2$ & $\Z/2 \times SD_{16}$ & $10$ & $16$ \\ \hline
    $\Sigma_8$ & $2$ & $D_8 \wr \Z/2$ & $10$ & ? \\ \hline    
    $\Sigma_9$ & $3$ & $\Z/3 \wr \Z/3$ & $5$ & $6$ \\ \hline
    $A_9$ & $3$ & $\Z/3 \wr \Z/3$ & $6$ & $7$ \\ \hline
    $PSL_3(\F_{19})$ & $3$ & $(\Z/3 \times \Z/9) \rtimes \Z/3$ & $7$ & $16$ \\ \hline
  \end{tabular}
\end{center}

There are other examples of finite groups where $\Rep_G(S)$ is not factorial aside from these. For instance, if $\Rep_G(S)$ is
not factorial and $H$ is any other finite group with $p$-Sylow subgroup $R$, then $\Rep_{G \times H}(S \times R)$ is not factorial.
Another instance comes from finite groups with the same fusion systems (by Remark \ref{IsomorphismFusion}), for example the fusion systems 
of $SL_3(\F_3)$ and $M_{11}$ at the prime $2$ are isomorphic \cite{MP}, hence $\Rep_{M_{11}}(S)$ is not factorial when $S$ is a
$2$-Sylow subgroup of $M_{11}$.

We also found using these functions that there is uniqueness of factorization for $PSp_4(\F_3)$ and $PSL_4(\F_7)$ at the prime $3$ and for $A_8$ at the
prime $2$. 

\bibliographystyle{amsplain}

\bibliography{mybibfile}

\end{document}